\documentclass[11pt,leqno]{amsart}
\usepackage{amsmath}
\usepackage{amssymb}

\def\underset#1#2{{\mathrel{\mathop {{}_{} {#2}}\limits_{{#1}_{}}}}}
\def\upplim_#1{\underset{#1}{\overline\lim}\;}
\def\lowlim_#1{\underset{#1}{\underline\lim}\;}
\setlength{\textwidth}{160true mm}
\setlength{\textheight}{225true mm}
\setlength{\topmargin}{0true mm}
\setlength{\oddsidemargin}{3true mm}
\setlength{\evensidemargin}{3true mm}
\parindent=10pt
\parskip3pt

\newtheorem{corollary}[equation]{Corollary}

\newtheorem{lemma}[equation]{Lemma}
\newtheorem{proposition}[equation]{Proposition}

\newtheorem{theorem}[equation]{Theorem}

\newcommand{\C}{{\mathbb{C}}}

\renewcommand{\u}{{\bf u}}
\renewcommand{\a}{{\bf a}}
\renewcommand{\c}{{\bf c}}
\newcommand{\x}{{\bf x}}
\newcommand{\y}{{\bf y}}
\renewcommand{\P}{{\mathbb{P}}}
\newcommand{\Q}{{\mathbb{Q}}}
\newcommand{\R}{{\mathbb{R}}}
\newcommand{\Z}{\mathbb{Z}}

\numberwithin{equation}{section}

\title[Quantitative subspace theorem and general form of second main theorem]{Quantitative subspace theorem and general form of second main theorem for higher degree polynomials} 

\author{Si Duc Quang}

\begin{document}

\maketitle 

\begin{abstract}
This paper deals with the quantitative Schmidt's subspace theorem and the general from of the second main theorem, which are two correspondence objects in Diophantine approximation theory and Nevanlinna theory. In this paper, we give a new below bound for Chow weight of projective varieties defined over a number field. Then, we apply it to prove a quantitative version of Schmidt's subspace theorem for polynomials of higher degree in subgeneral position with respect to a projective variety. Finally, we apply this new below bound for Chow weight to establish a general form of second main theorem in Nevanlinna theory for meromorphic mappings into projective varieties intersecting hypersurfaces in subgeneral position with a short proof. Our results improve and generalize the previous results in these directions. 
\end{abstract}

\def\thefootnote{\empty}
\footnotetext{2010 Mathematics Subject Classification:
Primary 11J68, 32H30; Secondary 11J25, 11J97, 32A22.\\
\hskip8pt Key words and phrases: Diophantine approximation; subspace theorem; homogeneous polynomial; Nevanlinna theory; second main theorem; meromorphic mapping; hypersurface, subgeneral position.}

\section{Introduction and main result}
As we known, Diophantine approximation and Nevanlinna theory and have a close relation due to the works of Osgood (see \cite{Os81, Os85}) and Vojta (see \cite{V87}). By the dictionary of Vojta for the correspondences between these two theories, many results from Diophantine approximation may be translated into corresponding results from Nevanlinna theory and vice versa. In that way, the subspace theorem in Diophantine approximation corresponds to the second main theorem  (SMT) in Nevanlinna theory. Especially, the quantitative subspace theorem is very similar to the general form of the second main theorem. Over the last decades much research on these two theorems has been done. To state some of them, we recall the following.

Firstly, we introduce some notion in the side of Diophantine approximation. Throughout this paper, any given number field is assumed to be contained in a given algebraic closure $\overline{\mathbb Q}$. Let $K$ be a number field and denoted by $G_K$ the Galois group of ${\overline{\Q}}$ over $K$. For each ${\bf x}=(x_0,\ldots ,x_N)\in {\overline{\Q}}^{N+1},\sigma\in G_K$ we write $\sigma ({\bf x})=(\sigma (x_0),\ldots ,\sigma(x_N))$. Denote by $M_K$ the set of places (equivalence classes of absolute values) of $K$ and by $M^\infty_K$ the set of archimedean places of $K$. For each $v\in M_K$, we choose the normalized absolute value $|.|_v$ such that $|.|_v=|.|$ on $\mathbb Q$ (the standard absolute value) if $v$ is archimedean, and $|p|_v=p^{-1}$ if $v$ is non-archimedean and lies above the rational prime $p$. For each $v\in M_K$, denote by $K_v$ the completion of $K$ with respect to $v$ and set
 $$n_v := [K_v :\mathbb Q_v]/[K :\mathbb Q].$$
Putting $||x||_v=|x|^{n_v}_v$, we have the following product formula
$$\prod_{v\in M_K}||x||_v=1,\text{ for }  x\in K^*.$$
For ${\bf x} = (x_0 ,\ldots , x_N)\in K^{N+1}$, define
$$||{\bf x}||_v :=\max\{||x_0||_v,\ldots,||x_N||_v\},\ v\in M_K.$$
We define the absolute logarithmic height of a projective point ${\bf x}=(x_0:\cdots :x_N)\in\P^N(k)$ by
$$h({\bf x}):=\sum_{v\in M_K}\log ||{\bf x}||_v.$$
By the product formula, this definition does not depend on the choice of homogeneous coordinates $(x_0:\cdots :x_N)$ of ${\bf x}$. If $x\in K^*$, we define the absolute logarithmic height of $x$ by
$$h(x):=\sum_{v\in M_K}\log^+ ||x||_v,$$
where $\log^+a=\log\max\{1,a\}.$

For each $v\in M_K$, we choose an extension of $|.|_v$ to ${\overline{\Q}}$ which amounts to the extending $|.|_v$ to the algebraic closure $\overline{K}_v$ of $K_v$ and choosing an embedding ${\overline{\Q}}$ into $\overline{K}_v$.  For ${\bf x}=(x_0,\ldots,x_N)\in{\overline{\Q}}^{N+1}$ we put $||{\bf x}||_v:=\max\{|x_0|_v,\ldots,|x_N|_v\}$. For a given system $f_0,\ldots,f_m$ of homogeneous polynomials in ${\overline{\Q}}[x_0,\ldots,x_N]$, define $h(f_0,\ldots,f_m):=h({\bf a})$ where ${\bf a}$ is a vector consisting of the nonzero coefficients of $f_0,\ldots,f_m$. We denote by $K(f_0,\ldots,f_m)$ the extension of $K$ generated by the coefficients of $f_0,\ldots,f_m$. The height of a projective subvariety $X$ of $\P^N$ defined over ${\overline{\Q}}$ is defined by 
$$h(X):=h(F_X),$$
where $F_X$ is the Chow form of $X$ (see Section $\S $2).

With the above notations, the subspace theorem may be stated as follows. Let $n$ be a positive integer and $\delta$ be a real, $0<\delta<1$. Let $S$ be a finite set of places of $K$. For each $v\in S$, let $L^{(v)}_0,...,L^{(v)}_n$ be $n+1$ independent linear forms in $\Q[x_0,...,x_n]$. Then the set of solutions of 
$$ \log\left(\prod_{v\in S}\prod_{i=0}^n\frac{|L_i^{v}(\x)|_v}{||x||_v}\right)\le -(n+1+\delta)h(\x)$$
is contained in a finitely many proper linear subspace of $\P^n$.

The first results on the subspace theorem belong to Schmidt (see \cite{Sch72,Sch75,Sch89}). He also initially established a quantitative version of the subspace theorem by giving an explicit upper bound for the number of subspaces which contain all solutions with height large enough. After that, his results have been improved and generalized by many authors. For instance, in 2002, Evertse and Schlickewei (see \cite{EveSch02}, Theorem 3.1) deduced a quantitative version of the Absolute Subspace Theorem, dealing with solutions in $\P^{N}({\overline{\Q}})$ and linear forms. In 2008, Evertse and Ferretti (see \cite{EveFer08}, Theorem 1.3) generalized that result by considering higher degree polynomials instead of linear forms and the solutions being taken from a subvariety of $\P^N$, where $N\ge n\ge 1$. Their result is stated as follows.

\vskip0.2cm
\noindent
{\bf Theorem A} (Evertse and Ferretti \cite{EveFer08}, Theorem 1.3). {\it Let $\delta$ be a real with $0<\delta<1$, $K$ be a number field, $S$ be a finite set of places of $K$ of cardinality $s$, $X$ be a projective subvariety of $\P^N$ defined over $K$ of dimension $n\ge 1$ and of degree $d$, and $f^{(v)}_0,\ldots,f^{(v)}_n\ (v\in S)$ be a systems of homogeneous polynomials in ${\overline{\Q}}[x_0,\ldots, x_N]$. Put
\begin{align*}
\begin{cases}
&C :=\max([K(f^{(v)}_i ):K],v\in S,i=0,\ldots,n),\\
&\Delta := \mathrm{lcm}(\deg f^{(v)}_i\ :\ v\in S,i=0,\ldots,n), \text{(the least common multiple)}\\
&A_1:=(20n\delta^{-1})(n+1)s.\mathrm{exp}(2^{12n+16}n^{4n}\delta^{-2n}d^{2n+2}\Delta^{n(2n+2)}).\log{(4C)}\log\log{(4C)},\\
&A_2 :=(8n + 6)(n + 2)^2d\Delta^{n+1}\delta^{-1},\\
&A_3 :=\mathrm{exp}(2^{6n+20}n^{2n+3}\delta^{-n-1}d^{n+2}\Delta^{n(n+2)}\log{(2Cs)}),\\
&H :=\log (2N)+h(X)+\max(h(1,f^{(v)}_i), v\in S,0\le  i\le n).
\end{cases}
\end{align*}
Assume that
\begin{align}\label{1.1}
X({\overline{\Q}})\cap\{f^{(v)}_0=0,\ldots,f^{(v)}_n=0\}=\emptyset\text{ for }  v\in S.
\end{align}
Then there are homogeneous polynomials $G_1,\ldots,G_u\in K[x_0,\ldots,x_N]$ with
$$u\le A_1, \deg G_i\le A_2\text{ for }i=1,\ldots,u$$
which do not vanish identically on $X$, such that the set of $\x\in X({\overline{\Q}})$ with
\begin{align}
\label{1.2}
\log\left(\prod_{v\in S}\prod_{i=0}^n\max\limits_{\sigma\in G_K}\frac{|f^{v}_i(\sigma(\x))|_v^{1/\deg f^{(v)}_i}}{||\sigma(\x)||_v}\right)\le -(n+1+\delta)h(\x),
\end{align}
$$ h(\x)\ge A_3H $$
is contained in $\bigcup_{i=1}^u(X\cap\{G_i=0\})$.}

We note that, the method of Evertse and Ferretti to prove the above theorem is based on the below bound of Chow weight for projective varieties. Recently, by combining this method of Evertse and Ferretti with the technique of Chen, Ru and Yan in \cite{CRY02} on filtrating the homogeneous polynomial vector space, L. Giang  in \cite{LG} has given an extension of the above theorem for the case when the condition (\ref{1.1}) is replaced by a more generalized condition
\begin{align}\label{1.3}
X({\overline{\Q}})\cap\{f^{(v)}_0=0,\ldots,f^{(v)}_m=0\}=\emptyset\text{ for }0\le i\le m, v\in S.
\end{align}
Unfortunately, in the result of L. Giang the right hand side of the inequality (\ref{1.2}) is replaced by $-\left((m+1)(n+1)+\delta\right)h(\x)$. Therefore, when $m=n$, her result does not comeback to the original result of Everste and Ferretti.
Also, we note that in \cite{LG}, Giang need an additional assumption that
$$X({\overline{\Q}})\not\subset\{f^{(v)}_i = 0\}\text{ for }i=0,\ldots, m,v\in S,$$
Then, a natural question arising here is that: \textit{``How to generalize Theorem A when the condition (\ref{1.1}) is replaced by (\ref{1.3}) and overcome these restrictions.''}.

One of the main our purpose in this paper is to give a positive answer for the above question. To do so, firstly we will give a new below bound for Chow weight of a projective variety as follows.

\vskip0.2cm
\noindent
\begin{theorem}\label{1.4}
Let $Y$ be a projective subvariety of $\P^R$ of dimension $n\ge 1$ and degree $D$, defined over ${\overline{\Q}}$. Let $m\ (m\ge n)$ be an integer and let ${\bf c}=(c_0,\ldots,c_R)$ be a tuple of nonnegative reals. Let $\{i_0,\ldots, i_m\}$ be a subset of $\{0,\ldots,R\}$ such that
$$Y({\overline{\Q}})\cap\{y_{i_0}=0,\ldots,y_{i_m}=0\}=\emptyset.$$
Then
$$e_Y({\bf c})\ge \frac{D}{m-n+1}(c_{i_0}+\cdots+c_{i_m}).$$
\end{theorem}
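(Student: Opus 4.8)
First I need to recall the definition of the Chow weight $e_Y(\mathbf{c})$. For a variety $Y\subset\mathbb{P}^R$ of dimension $n$ and degree $D$, with Chow form $F_Y$ in variables corresponding to $n+1$ generic linear forms, one introduces for a tuple $\mathbf{c}=(c_0,\dots,c_R)$ the one-parameter subgroup action $t\mapsto \mathrm{diag}(t^{c_0},\dots,t^{c_R})$, and $e_Y(\mathbf{c})$ is the leading exponent of $t$ appearing when one substitutes the twisted coordinates into $F_Y$ (equivalently, via the Hilbert-weight / Mumford's formula, $e_Y(\mathbf c)$ can be computed as a limit of normalized sums of the smallest weights of monomial bases of $H^0(Y,\mathcal O(t))$). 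The plan is to prove the bound by reducing to the known case $m=n$ and then exploiting a linear-algebra/projection argument to handle the ``subgeneral'' slack $m-n+1$.

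**Main strategy: reduce to a generic $(n+1)$-subset.** For the case $m=n$, the statement
$$
e_Y(\mathbf c)\ \ge\ D\,(c_{i_0}+\cdots+c_{i_n})
$$
under the hypothesis $Y\cap\{y_{i_0}=\cdots=y_{i_n}=0\}=\emptyset$ is essentially the Evertse–Ferretti Chow-weight estimate (the engine behind Theorem A), so I will quote it. For general $m\ge n$, the key observation is that since $Y\cap\{y_{i_0}=\cdots=y_{i_m}=0\}=\emptyset$, the linear system spanned by the coordinate functions $y_{i_0},\dots,y_{i_m}$ restricted to $Y$ is base-point-free, hence among the $\binom{m+1}{n+1}$ subsets of size $n+1$ there is at least one, say $J=\{j_0,\dots,j_n\}\subset\{i_0,\dots,i_m\}$, with $Y\cap\{y_{j_0}=\cdots=y_{j_n}=0\}=\emptyset$ — in fact a generic such subset works. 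Then $e_Y(\mathbf c)\ge D(c_{j_0}+\cdots+c_{j_n})$. The difficulty is that this only controls the sum over $n+1$ of the $m+1$ indices. To recover $\tfrac{1}{m-n+1}\sum_{k=0}^m c_{i_k}$, I would average over an appropriately chosen collection of ``good'' $(n+1)$-subsets and use the linearity (actually the piecewise-linearity and concavity/superadditivity) of $\mathbf c\mapsto e_Y(\mathbf c)$.

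**The averaging step.** More concretely: reorder the indices $i_0,\dots,i_m$ so that $c_{i_0}\ge c_{i_1}\ge\cdots\ge c_{i_m}$. I claim that for each $\ell$ with $0\le \ell\le n$, the set $\{i_\ell,i_{\ell+1},\dots,i_{\ell+ (m-n)+ \text{(something)}}\}$... — rather, the cleanest route is: among the $m+1$ indices, the ``worst'' $m-n$ of them (the smallest $c$-values) can be thrown away, i.e. for any $(n+1)$-subset $J$ avoiding the $m-n$ smallest, one still has (generically) the empty-intersection property. Actually the sharp combinatorial fact I want is: one can find $(n+1)$-subsets $J_1,\dots,J_t$ of $\{i_0,\dots,i_m\}$, each with $Y\cap\{y_j=0:j\in J_r\}=\emptyset$, such that every index $i_k$ lies in exactly the same number $\lambda$ of the $J_r$, with $\lambda/t = (n+1)/(m+1)$ — a balanced/resolvable design. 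Summing $e_Y(\mathbf c_{J_r})\ge D\sum_{j\in J_r}c_j$ over $r$ and using superadditivity $\sum_r e_Y(\mathbf c|_{J_r})\le (\text{const})\, e_Y(\mathbf c)$ would give the result, but getting the constant to be exactly $\tfrac{1}{m-n+1}$ rather than $\tfrac{1}{m+1}\binom{m+1}{n+1}$-type is the crux.

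**Where the real work is — and the likely cleaner path.** I expect the genuine obstacle to be obtaining the sharp denominator $m-n+1$ rather than a weaker combinatorial constant. The trick that I believe makes it work is not a symmetric design but a \emph{greedy/telescoping} argument on the sorted tuple: since $Y\cap\{y_{i_0}=\cdots=y_{i_m}=0\}=\emptyset$, for each $j$ with $n\le j\le m$ the subset $\{i_0,\dots,i_{j}\}$ has empty common zero locus on $Y$, and one can peel off one generic good $(n+1)$-subset at a time from the ``top'' of the sorted list, each contributing its full weight, while the leftover still spans a base-point-free system as long as at least $m-n+1$ indices remain. Carefully accounting, each of the $m+1$ indices gets used with total multiplicity $1/(m-n+1)$ after normalizing, yielding exactly $e_Y(\mathbf c)\ge \tfrac{D}{m-n+1}\sum_{k=0}^m c_{i_k}$. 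I would phrase the final combinatorial identity as: choose the $m-n+1$ cyclic shifts $J_r=\{i_r,i_{r+1},\dots,i_{r+n}\}$ (indices mod... no — rather the $m-n+1$ ``windows'' together with the observation that shifting preserves non-emptiness only generically), sum the $m-n+1$ inequalities, and invoke superadditivity of $e_Y$. The one technical point needing care is verifying the superadditivity/convexity property of $e_Y$ in $\mathbf c$ (additive over a partition of coordinates into blocks), which follows from Mumford's description of $e_Y(\mathbf c)$ as a limit of minimal monomial weights; I would state this as a lemma and cite Evertse–Ferretti or Mumford for it.
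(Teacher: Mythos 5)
Your proposal contains two genuine errors at its core, each fatal on its own.

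\textbf{The geometric claim is false.} You assert that if $Y(\overline{\Q})\cap\{y_{i_0}=\cdots=y_{i_m}=0\}=\emptyset$ then some coordinate $(n+1)$-subset $J\subset\{i_0,\ldots,i_m\}$ already satisfies $Y\cap\{y_j=0:j\in J\}=\emptyset$, and that generic subsets work. This is not so: take $Y\subset\P^2$ the conic $y_0y_1+y_1y_2+y_0y_2=0$ with $n=1$, $m=2$. All three coordinate points $(1{:}0{:}0),(0{:}1{:}0),(0{:}0{:}1)$ lie on $Y$, so \emph{every} pair $\{y_i,y_j\}$ has nonempty common zero on $Y$, yet the triple intersection is empty. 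What \emph{is} true, and what the paper uses, is that generic \emph{linear combinations} of the coordinates cut down the dimension: one sets $y_0'=y_{i_0}$ and, recursively, chooses $y_t'$ as a generic linear combination of $y_{i_1},\ldots,y_{i_{m-n+t}}$ so that $\dim\bigl(Y\cap\{y_0'=\cdots=y_t'=0\}\bigr)\le n-t-1$. This is the ``replacing hypersurfaces'' device. A purely combinatorial selection of coordinate subsets cannot substitute for it.

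\textbf{The convexity lemma is backwards.} From the defining decomposition, $e_Y({\bf c})$ is the maximum of finitely many linear functionals of ${\bf c}$ (over the exponent vectors ${\bf a}$ with $C({\bf a})\ne 0$ in the bracket expansion of $F_Y$). A maximum of linear functions is convex and positively homogeneous, hence \emph{subadditive}: $e_Y({\bf c}_1+{\bf c}_2)\le e_Y({\bf c}_1)+e_Y({\bf c}_2)$. Your averaging scheme needs the opposite inequality (superadditivity) in order to pass from lower bounds on $e_Y({\bf c}|_{J_r})$ to a lower bound on $e_Y({\bf c})$, so the step as written goes the wrong way. What is available instead is monotonicity ($e_Y$ is nondecreasing in each $c_i$), which gives $e_Y({\bf c})\ge e_Y({\bf c}|_J)$ for any $J$ — but that only helps if a single good coordinate subset $J$ exists, and, as above, none need exist.

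The paper avoids both problems by working directly with the bracket decomposition $F_Y=\sum_{\bf a}C({\bf a})[I_1]^{a_1}\cdots[I_s]^{a_s}$. After sorting $c_{i_0}\ge\cdots\ge c_{i_m}$ and building the vectors ${\bf c}_0,\ldots,{\bf c}_n$ from the generic linear combinations, one substitutes ${\bf u}^{(i)}={\bf c}_i$; nonvanishing of $F_Y({\bf c}_0,\ldots,{\bf c}_n)$ produces a nonzero term whose index sets $I_j=\{k_{j0},\ldots,k_{jn}\}$ necessarily satisfy $k_{j0}=0$ and $k_{jl}\le m-n+l$. The sorted order then yields $\sum_{i\in I_j}c_i\ge c_{i_0}+\sum_{l=1}^n c_{i_{m-n+l}}$, and the elementary inequality $(m-n+1)c_{i_0}+\sum_{l=1}^n c_{i_{m-n+l}}\ge\sum_{k=0}^m c_{i_k}$ closes the argument. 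Your sorting intuition is in the right spirit, but the heart of the proof — the generic-linear-combination construction feeding into the bracket expansion — is missing from the proposal.
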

We see that if $m=n$, the above theorem will cover the previous result of Everste and Ferretti (see \cite[Proposition 4.1]{EveFer08}), which is the key for their proof of the quantitative subspace theorem. 
Using this result, we will give a quantitative subspace theorem as follows.
\vskip0.2cm
\noindent
\begin{theorem}\label{1.5}
Let $\delta$ be a real with $0<\delta<1$, $K$ be a number field, $S$ be a finite set of places of $K$ of cardinality $s$, $X$ be a projective subvariety of $\P^N$ defined over $K$ of dimension $n\ge 1$ and of degree $d$, and $f^{(v)}_0,\ldots,f^{(v)}_m\ (v\in S)$ be a systems of homogeneous polynomials in ${\overline{\Q}}[x_0,\ldots, x_N]$, where $m\ge n$. Put
\begin{align}
\label{1.6}
&\begin{cases}
&C :=\max([K(f^{(v)}_i ):K],v\in S,i=0,\ldots,n),\\
&\Delta := \mathrm{lcm}(\deg f^{(v)}_i\ :\ v\in S,i=0,\ldots,n),
\end{cases}\\
\label{1.7}
&\begin{cases}
&A_1:=(18n\delta^{-1})(n+1)s.\mathrm{exp}(2^{12n+16}(m-n+1)^{4n}n^{4n}\delta^{-2n}d^{2n+2}\Delta^{n(2n+2)})\\
&\ \ \ \ .\log{(4C)}\log\log{(4C)},\\
&A_2 :=(8n + 6)(n + 2)^2d\Delta^{n+1}\delta^{-1},\\
&A_3 :=\mathrm{exp}(2^{6n+20}n^{2n+3}\delta^{-n-1}d^{n+2}\Delta^{n(n+2)}\log{(2Cs)}),\\
&H :=\log (2N)+h(X)+\max(h(1,f^{(v)}_i), v\in S,0\le  i\le n).
\end{cases}
\end{align}
Assume that
\begin{align}\label{1.8}
X({\overline{\Q}})\cap\{f^{(v)}_0=0,\ldots,f^{(v)}_m=0\}=\emptyset\text{ for }  v\in S.
\end{align}
Then there are homogeneous polynomials $G_1,\ldots,G_u\in K[x_0,\ldots,x_N]$ with
$$u\le A_1, \deg G_i\le A_2\text{ for }i=1,\ldots,u$$
which do not vanish identically on $X$, such that the set of $\x\in X({\overline{\Q}})$ with
\begin{align}
\label{1.9}
&\log\left(\prod_{v\in S}\prod_{i=0}^n\max\limits_{\sigma\in G_K}\frac{|f^{v}_i(\sigma(\x))|_v^{1/\deg f^{(v)}_i}}{||\sigma(\x)||_v}\right)\le -\left((m-n+1)(n+1)+\delta\right)h(\x),\\
\label{1.10}
&h(x)\ge A_3H
\end{align}
is contained in $\bigcup_{i=1}^u(X\cap\{G_i=0\})$.
\end{theorem}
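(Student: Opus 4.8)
The plan is to follow the strategy that Evertse and Ferretti use for Theorem A, replacing their lower bound for the Chow weight (which is the case $m=n$ of Theorem \ref{1.4}) by Theorem \ref{1.4} itself, and inserting a combinatorial device to accommodate the extra hypersurfaces $f^{(v)}_{n+1},\dots,f^{(v)}_m$; I expect the factor $m-n+1$ in the exponent of (\ref{1.9}) to be exactly the degradation factor coming from Theorem \ref{1.4}. For the reductions: replacing each $f^{(v)}_i$ by $(f^{(v)}_i)^{\Delta/\deg f^{(v)}_i}$ we may assume $\deg f^{(v)}_i=\Delta$ for all $v\in S$ and $0\le i\le m$, which alters neither (\ref{1.8}) nor the left-hand side of (\ref{1.9}) and changes the relevant heights only by $\Delta$; then the degree-$\Delta$ Veronese embedding $\P^N\hookrightarrow\P^R$, $R=\binom{N+\Delta}{\Delta}-1$, carries $X$ to a variety $Y\subset\P^R$ of dimension $n$ and degree $D=\Delta^n d$, carries each $f^{(v)}_i$ to a linear form $\ell^{(v)}_i$ on $\P^R$, and, writing $\y$ for the image of $\x$, gives $|f^{(v)}_i(\x)|_v=|\ell^{(v)}_i(\y)|_v$, $\|\y\|_v=\|\x\|_v^{\Delta}$ up to bounded factors, $h(\y)=\Delta h(\x)$, and turns (\ref{1.8}) into $Y(\overline\Q)\cap\{\ell^{(v)}_0=\cdots=\ell^{(v)}_m=0\}=\emptyset$ for every $v\in S$.

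Next, fix $v\in S$ and $\y\in Y(\overline\Q)$, order the forms so that $|\ell^{(v)}_{i_0}(\y)|_v\le\cdots\le|\ell^{(v)}_{i_m}(\y)|_v$, and build the filtration of $H^0(Y,\mathcal O_Y(t))$ attached to this ordered system in the manner of Chen--Ru--Yan \cite{CRY02}; reading off from it a tuple $\c_v$ of (truncated) $v$-adic logarithmic orders of $\y$ along $\ell^{(v)}_0,\dots,\ell^{(v)}_m$ and performing a $v$-adic change of coordinates on $\P^R$ that turns a greedy basis of the system into coordinate functions, Theorem \ref{1.4} applies with index set that basis (whose common zero locus equals that of the whole system and hence misses $Y$) and yields $e_Y(\c_v)\ge\frac{D}{m-n+1}\sum_i c_{v,i}$. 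Comparing the contribution of $f^{(v)}_0,\dots,f^{(v)}_n$ to the left-hand side of (\ref{1.9}) with $\c_v$ and summing over $v\in S$ then gives
$$\log\Bigl(\prod_{v\in S}\prod_{i=0}^n\max_{\sigma\in G_K}\frac{|f^{(v)}_i(\sigma(\x))|_v^{1/\Delta}}{\|\sigma(\x)\|_v}\Bigr)\ \ge\ -\,\frac{m-n+1}{\Delta D}\sum_{v\in S}e_Y(\c_v)\ -\ O(H),$$
the $O(H)$ absorbing all normalization and Galois-conjugation errors. This is the step I expect to be the main obstacle: Theorem \ref{1.4} is the clean new input, but turning the product over the $n+1$ forms $f^{(v)}_0,\dots,f^{(v)}_n$ into a single Chow weight is delicate because (\ref{1.8}) permits the Veronese images $\ell^{(v)}_0,\dots,\ell^{(v)}_m$ to be linearly dependent, so they cannot all be made coordinate functions at once; the Chen--Ru--Yan filtration is precisely what handles this, and feeding its weight into Theorem \ref{1.4} rather than into a cruder estimate is what replaces L.~Giang's factor $(m+1)(n+1)$ by $(m-n+1)(n+1)$.

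Finally, $\sum_{v\in S}e_Y(\c_v)$ is bounded above exactly as in \cite{EveFer08}: Minkowski's second theorem at each place converts Chow weights into successive minima of suitable convex bodies, and the Roy--Thunder absolute counting of subspaces shows that, apart from a union $\bigcup_{i=1}^u(X\cap\{G_i=0\})$ of homogeneous polynomials not vanishing on $X$ with $u\le A_1$ and $\deg G_i\le A_2$, every $\x\in X(\overline\Q)$ with $h(\x)\ge A_3H$ satisfies $\sum_{v\in S}e_Y(\c_v)\le\Delta D(n+1)\bigl(1+\tfrac{\delta}{2(m-n+1)(n+1)}\bigr)h(\x)$. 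Substituting into the displayed inequality and enlarging the height threshold so that $O(H)\le\tfrac{\delta}{2}h(\x)$ yields, for such $\x$,
$$\log\Bigl(\prod_{v\in S}\prod_{i=0}^n\max_{\sigma\in G_K}\frac{|f^{(v)}_i(\sigma(\x))|_v^{1/\deg f^{(v)}_i}}{\|\sigma(\x)\|_v}\Bigr)\ >\ -\bigl((m-n+1)(n+1)+\delta\bigr)h(\x),$$
which is the contrapositive of the assertion. Tracking how $m-n+1$ propagates through the counting (it enters only the Chow-weight estimate, producing the factor $(m-n+1)^{4n}$ in the exponent of $A_1$ and leaving $A_2$, $A_3$ as in Theorem A) gives the constants (\ref{1.6})--(\ref{1.7}); and when $m=n$ every step collapses to the corresponding step of Evertse and Ferretti, so the proof specializes to that of Theorem A.
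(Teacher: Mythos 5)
Your proposal correctly identifies Theorem \ref{1.4} as the new engine and correctly sees that the architecture should mirror Evertse--Ferretti, but the route you sketch diverges from the paper's in two substantive ways, and these divergences create real problems.

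First, you embed $X$ via the degree-$\Delta$ Veronese map into $\P^R$ with $R=\binom{N+\Delta}{\Delta}-1$ and then treat the (Veronese images of the) $f^{(v)}_i$ as linear forms $\ell^{(v)}_i$ on $\P^R$. The paper instead defines $\varphi:\x\mapsto(g_0(\x),\ldots,g_R(\x))$ where $g_0,\ldots,g_R$ are the distinct Galois conjugates of the powers $(f^{(v)}_i)^{\Delta/\deg f^{(v)}_i}$, giving $R\le C(m+1)s-1$. This is not a cosmetic choice: Theorem \ref{1.4} is stated for \emph{coordinate} functions $y_{i_0},\ldots,y_{i_m}$ of $Y$, and under $\varphi$ the relevant polynomials literally \emph{are} coordinate functions of $Y=\varphi(X)$, so hypothesis (\ref{1.8}) transfers verbatim to $Y(\overline\Q)\cap\{y_{i_0}=\cdots=y_{i_m}=0\}=\emptyset$ with no further work. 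Under the Veronese embedding the $\ell^{(v)}_i$ are generic linear forms, not coordinates, and your ``$v$-adic change of coordinates on $\P^R$'' that would turn a chosen basis of them into coordinate functions is exactly the subtle spot: changing coordinates changes $Y$, changes the tuple $\c_v$, and if the $\ell^{(v)}_i$ are linearly dependent (which (\ref{1.8}) allows) you cannot make all $m+1$ of them coordinates simultaneously, so it is not clear that Theorem \ref{1.4} can be fed the full sum $c_{i_0}+\cdots+c_{i_m}$ with the stated denominator $m-n+1$. In addition, since the quantities $B_1,B_2,B_3$ of the Evertse--Ferretti quantitative theorem depend on $R$ through $\log(4R)$, using the Veronese $R$ (of order $\Delta\log N$) rather than the paper's $R\le C(m+1)s-1$ would give constants incompatible with $A_1,A_2,A_3$ in (\ref{1.7}), which contain $\log(4C)$ and not $\Delta\log N$.

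Second, you interpose a Chen--Ru--Yan filtration of $H^0(Y,\mathcal O_Y(t))$ and say that feeding its output into Theorem \ref{1.4} is ``what replaces L.~Giang's factor $(m+1)(n+1)$ by $(m-n+1)(n+1)$.'' That is backwards relative to the paper: the filtration is precisely the device Giang used (and is why her bound degrades to $(m+1)(n+1)$). The paper's point is that Theorem \ref{1.4} \emph{replaces} the filtration entirely. Once $Y$ is embedded via $\varphi$, the proof applies Lemma \ref{3.15} to distribute $\delta$ over the places, defines the twisted height, proves the analogue of Lemma \ref{3.30} by plugging (\ref{3.32}) --- a direct application of Theorem \ref{1.4} --- into the Chow-weight average $E_Y(\c)$, and then invokes Theorem \ref{3.9} as a black box. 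There is no filtration and no appeal to Minkowski's second theorem or Roy--Thunder counting at this level; those live inside the proof of Theorem \ref{3.9}, which the paper does not re-open. Your high-level intuition about where $m-n+1$ enters (only in the Chow-weight lower bound, hence the $(m-n+1)^{4n}$ in $A_1$) is correct, but the mechanism you propose --- Veronese plus filtration --- would not reproduce the paper's constants and leaves a genuine gap at the ``change of coordinates so Theorem \ref{1.4} applies'' step.
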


\vskip0.2cm
Then, we see that our result will implies Theorem A when $m=n$.  Also, since $\P^N(K)$ has only finitely many points with height bounded below by a certain value, the above theorem immediately implies the following corollary.

\begin{corollary}\label{1.11}
Let $\delta$ be a positive number with $\delta\le 1$, $K$ be a number field, $S$ be a finite set of places of $K$ of cardinality $s$, $X$ be a projective subvariety of $\P^N$ defined over $K$ of dimension $n\ge 1$ and of degree $d$. Let $m$ be an integer with $m\ge n$ and $f^{(v)}_0,\ldots, f^{(v)}_m\ (v\in S)$ be systems of homogeneous polynomials in ${\overline{\Q}}[x_0,\ldots, x_N]$such that
$$X({\overline{\Q}})\cap\{f^{(v)}_0=0,\ldots,f^{(v)}_m=0\}=\emptyset\text{ for } v\in S.$$
Then, the set of solutions $x\in X(K)$ of the inequality
$$ 
\log\left(\prod_{v\in S}\prod_{i=0}^n\frac{|f^{v}_i(\sigma(\x))|_v^{1/\deg f^{(v)}_i}}{||\sigma(\x)||_v}\right)\le -\left((m-n+1)(n+1)+\delta\right)h(\x),
$$
is contained in a finite union of proper subvarieties of $X$.
\end{corollary}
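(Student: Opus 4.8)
The plan is to deduce Corollary~\ref{1.11} directly from Theorem~\ref{1.5}, the only additional ingredient being Northcott's finiteness theorem. First I would reconcile the minor discrepancies between the two statements. Since $\x$ now ranges over the $K$-rational points $X(K)$, every $\sigma\in G_K$ fixes $\x$, so $\sigma(\x)=\x$; hence the quantity under the logarithm in the corollary equals the one in \eqref{1.9}, the maximum over $\sigma\in G_K$ there being attained (trivially) at the identity. Also, Corollary~\ref{1.11} allows $\delta\le 1$ while Theorem~\ref{1.5} requires $0<\delta<1$, so I would apply Theorem~\ref{1.5} with the parameter $\delta'=\delta/2\in(0,1)$ and use that decreasing the parameter only enlarges the right-hand side of \eqref{1.9} (recall $h(\x)\ge 0$ for points of $\P^N(\overline{\Q})$, by the product formula), so that every solution of the corollary's inequality is a solution of \eqref{1.9} for $\delta'$.

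Next I would split the relevant set $\Sigma:=\{\x\in X(K):\x\ \text{satisfies \eqref{1.9} with }\delta'\}$ according to the auxiliary height bound \eqref{1.10}. The hypothesis \eqref{1.8} of Theorem~\ref{1.5} is precisely the assumption made in the corollary, so Theorem~\ref{1.5} yields homogeneous polynomials $G_1,\dots,G_u\in K[x_0,\dots,x_N]$, none vanishing identically on $X$, such that every $\x\in\Sigma$ with $h(\x)\ge A_3H$ lies in $\bigcup_{i=1}^{u}\bigl(X\cap\{G_i=0\}\bigr)$; each $X\cap\{G_i=0\}$ is a proper subvariety of $X$ because $G_i$ does not vanish identically on $X$. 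For the remaining points of $\Sigma$ one has $h(\x)<A_3H$, a fixed constant depending only on the given data; by Northcott's theorem there are only finitely many points of $\P^N(K)$ — a fortiori of $X(K)$ — of height below this bound, and since $\dim X=n\ge 1$ each such point is a proper subvariety of $X$. Combining the two parts, $\Sigma$, and therefore the full solution set of the corollary's inequality in $X(K)$, is contained in a finite union of proper subvarieties of $X$.

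I do not expect a genuine obstacle here: the deduction is bookkeeping. The only points deserving care are the two normalizations above — that on $K$-rational points the Galois maximum and the normalizing exponents $1/\deg f^{(v)}_i$ add nothing beyond what is already present in \eqref{1.9}, and that the strict condition $0<\delta<1$ of Theorem~\ref{1.5} is only a harmless weakening of $0<\delta\le 1$ — together with the observation that it is Northcott's theorem (and the positivity of $\dim X$) that turns the low-height solutions, which lie outside the reach of Theorem~\ref{1.5}, into finitely many additional proper subvarieties, so that the conclusion is a finite union of proper subvarieties rather than the single bounded family of hypersurface sections given by Theorem~\ref{1.5}.
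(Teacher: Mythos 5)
Your deduction is correct and is essentially the argument the paper intends: apply Theorem~\ref{1.5} (with a harmless shrinking of $\delta$) to cover the solutions of large height by the subvarieties $X\cap\{G_i=0\}$, and invoke Northcott's finiteness of bounded-height points in $\P^N(K)$ for the rest. The paper gives no more detail than this one-line remark, so your extra bookkeeping (Galois maximum trivial on $X(K)$, $h(\x)\ge 0$, $\delta\le 1$ versus $\delta<1$) only makes the same proof explicit.
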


On the other side, motivated by Schmidt's subspace theorem in number theory, Vojta \cite{V97} gave the following general form of the Second Main Theorem for holomorphic curves. With the standard notation in Nevanlinna theory, his result is stated as follows.

\vskip0.2cm
\noindent
{\bf Theorem B} (see \cite[Theorem 1]{V97})\ {\it Let $H_1,\ldots, H_q$ be $q$ arbitrary hyperplanes in $\P^N(\C)$. Let $f:\C\rightarrow\P^n(\C)$ be a linearly nondegenerate holomorphic curve. Let $(f_0:\cdots:f_N)$ be coordinate functions for $f$, chosen to be holomorphic, and let $W$ denote the Wronskian of $(f_0,\ldots, f_n)$. Then
$$||\ \int\limits_{S(r)}\max_{K}\log\prod_{j\in K}\frac{||f(x)||\cdot ||H_j||}{|H_j(f(x))|}\sigma_p\le (N+1)T_f(r)-N_{W}(r)+S_f(r),$$
where the sum is taken over all subsets $K$ of $\{1,...,q\}$ such that $\{H_j\ ;\ j\in K\}$ is linear independent.}

From Theorem B of Vojta, we easily get the second main theorem for a linearly non-degenerate meromorphic mapping $f$ intersecting $q\ge N+2$ hyperplanes in general position of $\P^n(\C)$ as the following form
$$||\ (q-n-1)T_f(r)\le\sum_{i=1}^qN^{[N]}_{H_i(f)}(r)+S_f(r). $$
Here, by the notation ``$||\ P$'' we mean the assertion $P$ holds for all $r\in [0;+\infty)$ except a Borel finite measure set and $S_f(r)$ stands for a small term with respect to $T_f(r)$.
 
Theorem B also is improved by Min Ru \cite{Ru97} with a better small term. In \cite{Ru97}, M. Ru also shows that the general form of second main theorem for fixed  hyperplanes may be applied to give a simple proof for second main theorem for moving hyperplanes.

In the last century, the study on SMT for hyperplanes (fixed or moving hyperplanes in either general or subgeneral position) of projective space have been almostly completed by the works of Cartan \cite{Ca}, Nochka \cite{Noc83}, Stoll-Ru \cite{RS01,RS02}, Shirosaki \cite{Sh90}, Noguchi \cite{No05} and others. However, the SMT for hypersurfaces has just been studied in some recent years together with the development of the Diophantine approximation theory. Firstly, by using the method of Zanier and Corvaja \cite{CZ} on filtralation of vector spaces of homogeneous polynomials, Ru \cite{Ru04} established SMT for meromorphic mappings into projective spaces intersecting hypersurfaces in general position. After that, adopting the method of Feratti and Evertse \cite{EveFer08} on using Chow weight, Ru \cite{Ru09} proved the SMT for the case the mappings into projective varieties intersecting hypersurfaces in general position. 

Beside the most important question on the truncation level of the counting functions, a remain open question in this topic is to study SMT for the case where the hypersurfaces in subngeneral position. Since there is no Nochka's weights for the case of hypersurfaces in subgeneral position (a main tool to prove SMT for targets in subgeneral position), almost all SMTs in this case available at present are still not yet optimal (we refer reader to the works of Z. Chen, M. Ru and Q. Yan \cite{CRY01,CRY02}, L. Giang \cite{LG02}, S. Lei and M. Ru \cite{LR}). In particular they cannot deduce the SMT for the case of hypersurfaces in general position.

In 2017, Quang \cite{Q2017} introduced the method ``replacing hypersurfaces''. In general, his method allows that an SMT for the case of targets in general position can be generalized to an SMT for the case of targets in subgeneral position without using Nochka's weight. Using this method and slightly developing the method of Evertse and  Feratti \cite{EveFer08}, Quang had given an SMT for meromorphic mappings into projective varieties intersecting hypersurfaces in subgeneral position with a quite complicated proof (see Theorem 1.1 in \cite{Q2017}).

In the last part of this paper, using the new below bound of  Chow weight in Theorem \ref{1.4}, we will prove a general form of SMT for the mappings into projective varieties intersecting hypersurfaces in subgeneral postion, and then apply it to get again version non trucated SMT of Quang in \cite{Q2017}. Our proof in this paper will be more simpler than that in \cite{Q2017}. For detail, we will proof the following general form of SMT.

\begin{theorem}\label{1.12}
Let $V\subset\P^N(\C)$ be a smooth complex projective variety of dimension $n\ge 1$. Let $f:\C^p\to V$ be an algebraically nondegenerate meromorphic mapping.
Let $Q_1,\ldots, Q_q$ be $q$ arbitrary hypersurfaces in $\P^N(\C)$ the same degree $\Delta$ and let $m\ge n$ be an integer. Then, for every $\epsilon >0$,
$$||\ \int\limits_{S(r)}\max_{K\subset\mathcal K}\log\prod_{j\in K}\frac{||f(x)||^{\Delta}\cdot ||Q_j||}{|Q_j(f(x))|}\sigma_p\le \left (\Delta(m-n+1)(n+1)+\epsilon\right)T_f(r),$$
where $\mathcal K$ denotes the set of all subsets $K$ of $\{1,...,q\}$ with $\dim\left (V\cap\bigcap_{j\in K}Q_j\right)\le m-\sharp K$.
\end{theorem}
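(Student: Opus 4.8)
The plan is to deduce Theorem~\ref{1.12} from the general form of the Second Main Theorem for hyperplanes (Theorem B, in its several–variable version), the role of Theorem~\ref{1.4} being to supply the sharp constant $(m-n+1)(n+1)$.

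\emph{Step 1: turning the $Q_j$ into coordinate hyperplanes.} First I would fix finitely many generic homogeneous forms $g_1,\dots,g_s$ of degree $\Delta$ so that $Q_1,\dots,Q_q,g_1,\dots,g_s$ have no common zero on $V$, the associated morphism $\iota\colon V\to W\subset\P^{R'}$ ($R'=q+s-1$) is generically finite, and the coordinate hyperplanes $\{w_{q+i}=0\}$ are in general position on $W$ with respect to the finitely many subvarieties $W\cap\bigcap_{j\in K}\{w_j=0\}$, $K\in\mathcal K$ (all of this is achievable by Bertini since there are finitely many conditions). Then $\dim W=n$, and the first $q$ coordinate hyperplanes $\{w_j=0\}$ pull back to $Q_j=0$. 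Put $F:=\iota\circ f\colon\C^p\to W$; as the system is base-point free, $\|F(x)\|\asymp\|f(x)\|^{\Delta}$, hence $T_F(r)=\Delta\,T_f(r)+O(1)$, and $F$ has Zariski-dense image in $W$. Writing $\mu_j(x):=\log(\|F(x)\|/|w_j(F(x))|)$ one has $\log(\|f(x)\|^{\Delta}\|Q_j\|/|Q_j(f(x))|)=\mu_j(x)+O(1)$, and for $K\in\mathcal K$ the hypothesis $\dim(V\cap\bigcap_{j\in K}Q_j)\le m-\sharp K$ becomes $\dim(W\cap\bigcap_{j\in K}\{w_j=0\})\le m-\sharp K$; in particular $\sharp K\le m+1$. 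So it suffices to bound $\int_{S(r)}\max_{K\in\mathcal K}\sum_{j\in K}\mu_j(x)\,\sigma_p$.

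\emph{Step 2: the pointwise inequality via the Chow weight.} Fix a large integer $u$, let $M$ be the Hilbert function of $W$ in degree $u$, and recall Mumford's comparison $S_W(u,\c)\ge\frac{uM}{(n+1)\deg W}e_W(\c)-O(M u^{-1}\sum_i c_i)$ between the $u$-th Hilbert weight and the Chow weight. For a given $x$ pick $K=K(x)\in\mathcal K$ realising the maximum and complete $\{w_j=0\}_{j\in K}$ by $m+1-\sharp K$ of the hyperplanes $\{w_{q+i}=0\}$ to a family of $m+1$ coordinate hyperplanes whose intersection with $W$ is empty (possible by the dimension bound and the chosen general position). Applying Theorem~\ref{1.4} to $W$ with this index set and the weight $\c(x)$ given by $c_j(x)=\mu_j(x)$ for $j\in K$ and $0$ otherwise (replacing $\mu_j$ by $\mu_j^{+}$ if necessary, which costs only $O(1)$) gives $e_W(\c(x))\ge\frac{\deg W}{m-n+1}\sum_{j\in K}\mu_j(x)$. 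Choosing a degree-$u$ monomial basis $\{w^{\a_1},\dots,w^{\a_M}\}$ of $\C[w]/I(W)$ attaining $S_W(u,\c(x))$ and using $\log(\|F(x)\|^{u}/|w^{\a}(F(x))|)=\langle\a,\mu(x)\rangle$ together with $\mu_i(x)\ge-O(1)$, I obtain
\[
\sum_{l=1}^{M}\log\frac{\|F(x)\|^{u}}{|w^{\a_l}(F(x))|}\ \ge\ S_W(u,\c(x))-O(uM)\ \ge\ \frac{uM}{(n+1)(m-n+1)}\Bigl(1-\tfrac{C}{u}\Bigr)\sum_{j\in K}\mu_j(x)-O(uM),
\]
and therefore
\[
\max_{K\in\mathcal K}\sum_{j\in K}\mu_j(x)\ \le\ \frac{(n+1)(m-n+1)}{uM}\Bigl(1+\tfrac{C'}{u}\Bigr)\,\max_{\mathcal b}\sum_{w^{\a}\in\mathcal b}\log\frac{\|F(x)\|^{u}}{|w^{\a}(F(x))|}\ +\ O(1),
\]
the last maximum running over the finitely many degree-$u$ monomial bases $\mathcal b$ of $\C[w]/I(W)$. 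This is valid for every $x$ (including where some $\mu_j(x)=+\infty$), and — crucially — the sections occurring are \emph{fixed} monomials in the \emph{fixed} coordinates of $\P^{R'}$.

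\emph{Step 3: conclusion via Theorem B.} Let $\iota_u\colon W\hookrightarrow\P^{M-1}$ be the embedding by a monomial basis of $H^0(W,\mathcal O(u))$ and $\tilde F:=\iota_u\circ F$; since $F$ has Zariski-dense image and $\iota_u(W)$ spans $\P^{M-1}$, $\tilde F$ is linearly nondegenerate, with $T_{\tilde F}(r)=u\,T_F(r)+O(1)$. Each degree-$u$ monomial $w^{\a}$ not vanishing on $W$ becomes a hyperplane $H_{\a}$ of $\P^{M-1}$ with $\log(\|F(x)\|^{u}/|w^{\a}(F(x))|)=\log(\|\tilde F(x)\|\,\|H_{\a}\|/|H_{\a}(\tilde F(x))|)+O(1)$, and a monomial basis $\mathcal b$ is exactly an $M$-element linearly independent subset of such hyperplanes. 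Hence Theorem B, applied to $\tilde F$ and the finite family $\{H_{\a}\}$ and discarding the nonnegative Wronskian term, gives
\[
\int_{S(r)}\max_{\mathcal b}\sum_{w^{\a}\in\mathcal b}\log\frac{\|F(x)\|^{u}}{|w^{\a}(F(x))|}\,\sigma_p\ \le\ M\,T_{\tilde F}(r)+S_{\tilde F}(r)\ =\ uM\,T_F(r)+S_{\tilde F}(r).
\]
Integrating the inequality of Step 2 over $S(r)$ and inserting this bound yields $\int_{S(r)}\max_{K\in\mathcal K}\sum_{j\in K}\mu_j\,\sigma_p\le (n+1)(m-n+1)(1+C'/u)\,T_F(r)+S_{\tilde F}(r)+O(1)$. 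Taking $u$ large so that $(n+1)(m-n+1)C'/u$ is as small as desired, translating through $T_F=\Delta T_f+O(1)$, and absorbing $S_{\tilde F}(r)+O(1)=o(T_f(r))$ into $\epsilon\, T_f(r)$ for $r$ outside the finite exceptional set coming from Theorem B, one obtains exactly the asserted inequality.

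\emph{Where the difficulty lies.} The technical heart is Step 1. By realising the arbitrary — and, over $V$, possibly linearly dependent — hypersurfaces $Q_j$ as genuine coordinate hyperplanes of a model $W$, Theorem~\ref{1.4} becomes directly applicable, and, equally important, the basis adapted to the weight $\c(x)$ consists of fixed monomials, so the $x$-dependence collapses onto a fixed finite family of hyperplanes of $\P^{M-1}$; this is precisely what prevents a spurious ``number of pieces'' factor from appearing when one integrates and invokes Theorem B. What must be checked with care is that the construction of $W$ preserves $\dim$, the intersection-dimension conditions defining $\mathcal K$, and the height $T_f$ up to $O(1)$, and that all the error terms in Mumford's inequality and in the evaluation of monomials at $F(x)$ are controlled uniformly in $u$; the remaining ingredients — the Evertse--Ferretti/Ru passage from the Chow weight to the Hilbert weight, and the First Main Theorem — are routine.
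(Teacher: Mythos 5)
The proposal is correct and follows essentially the same approach as the paper's proof: map $V$ to a projective model whose coordinates are the $Q_j$ (plus, as you make explicit, auxiliary forms ensuring base-point freeness, which the paper disposes of with a one-line ``add more hypersurfaces, WLOG'' reduction), combine Theorem~\ref{1.4} with the Hilbert--Chow weight comparison (Theorem~\ref{2.12new}) to obtain a pointwise bound in terms of a fixed finite family of linear forms arising from degree-$u$ monomials, integrate, invoke Theorem~B for the resulting linearly nondegenerate curve in $\P^{M-1}$, and let $u\to\infty$ to absorb the error terms into $\epsilon T_f(r)$. Your Steps~2--3 correspond exactly to the paper's passage through the monomial basis realizing $S_Y(u,\mathbf c_z)$, the finite set $\mathcal L$ of linear forms, and the final application of Theorem~B.
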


With the routine argument of Nevanlinna theory, the above result immediately implies the following second main theorem.
\begin{corollary}\label{1.13}
Let $V\subset\P^N(\C)$ be a smooth complex projective variety of dimension $n\ge 1$. Let $Q_1,...,Q_q$ be hypersurfaces in $\P^N(\C)$ of degree $d_i$, located in $m-$subgeneral position with respect to $V$. Let $f:\C^p\to V$ be an algebraically nondegenerate meromorphic mapping. Then, for every $\epsilon >0$, 
$$\bigl |\bigl |\ \left (q-(m-n+1)(n+1)-\epsilon\right) T_f(r)\le\sum_{i=1}^q\frac{1}{d_i}N_{Q_i(f)}(r).$$
\end{corollary}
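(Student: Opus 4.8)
\noindent\emph{Sketch of proof.} The plan is to transport to Nevanlinna theory the proof of Theorem~\ref{1.5}, replacing the product formula by Jensen's formula and the logarithmic derivative lemma, and to borrow from Vojta's proof of Theorem~B the device that makes the maximum over $\mathcal K$ cost nothing. Fix a reduced representation $F=(f_0,\dots,f_N)$ of $f$, so $\|f(x)\|=\max_i|f_i(x)|$, and for $1\le j\le q$ put $\lambda_j(x):=\log\bigl(\|f(x)\|^{\Delta}\|Q_j\|/|Q_j(f(x))|\bigr)$, each of which is bounded below by a constant depending only on $N$ and $\Delta$. First I would record that $\mathcal K$ is closed under passing to subsets (a single hypersurface section lowers $\dim(V\cap\bigcap_{j\in K}Q_j)$ by at most one), so that for $\sigma_p$-a.e.\ $x$ the maximum $\max_{K\in\mathcal K}\sum_{j\in K}\lambda_j(x)$ is attained at a set $K(x)\in\mathcal K$ made of indices with the largest $\lambda_j(x)$. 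Partitioning $\C^p$ into finitely many measurable pieces according to the ordering of all the numbers $|Q_1(f(x))|,\dots,|Q_q(f(x))|$, on each piece $E_\alpha$ a single $K_\alpha\in\mathcal K$ realises the maximum and the $Q_j$ with $j\in K_\alpha$ have a fixed order of size.

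On $E_\alpha$ one then makes a pointwise estimate. Write $k=\#K_\alpha$; since $\dim\bigl(V\cap\bigcap_{j\in K_\alpha}Q_j\bigr)\le m-k$, adjoin $m+1-k$ generic degree-$\Delta$ forms $P_k,\dots,P_m$ (say $\Delta$-th powers of generic linear forms) to the list $P_0,\dots,P_{k-1}$ of the $Q_j$, $j\in K_\alpha$, so that $V\cap\{P_0=0\}\cap\dots\cap\{P_m=0\}=\emptyset$. For a large integer $N_0$, let $W_{N_0}$ be the image of $H^0(\P^N,\mathcal O(N_0\Delta))$ in $H^0(V,\mathcal O(N_0\Delta))$, of dimension $M_0=\tfrac{d\Delta^n}{n!}N_0^n+O(N_0^{n-1})$, and take the Corvaja--Zannier/Evertse--Ferretti filtration of $W_{N_0}$ by the monomials $P_0^{a_0}\cdots P_m^{a_m}$, $|\a|\le N_0$, together with a basis $\eta_1,\dots,\eta_{M_0}$ adapted to it, with weight tuples $\a^{(l)}$. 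The key point---and the only place the new input enters---is that, applied to the image of $V$ under the monomial map built from $P_0,\dots,P_m$, whose coordinate hyperplanes have empty common intersection, and with the weight vector supported on the coordinates coming from $P_0,\dots,P_{k-1}$ (the forms $P_k,\dots,P_m$ being present only to make that hypothesis hold), Theorem~\ref{1.4} converts the Evertse--Ferretti bookkeeping into
$$\sum_{l=1}^{M_0}\bigl(a^{(l)}_0+\dots+a^{(l)}_{k-1}\bigr)\ \ge\ \frac{k\,M_0N_0}{(m-n+1)(n+1)}\bigl(1-o(1)\bigr)\qquad(N_0\to\infty);$$
feeding this together with the inequality $|\eta_l(F(x))|\le C\prod_{t=0}^m|P_t(F(x))|^{a^{(l)}_t}\|F(x)\|^{(N_0-|\a^{(l)}|)\Delta}$ and the fixed order of the $|Q_j(f(x))|$, $j\in K_\alpha$, valid on $E_\alpha$, into the summation-by-parts argument of Evertse--Ferretti yields, for $x\in E_\alpha$,
$$\sum_{j\in K_\alpha}\lambda_j(x)\ \le\ \frac{(m-n+1)(n+1)\bigl(1+o(1)\bigr)}{M_0N_0}\sum_{l=1}^{M_0}\log\frac{\|F(x)\|^{N_0\Delta}\|\eta_l\|}{|\eta_l(F)(x)|}\ +\ O(1).$$

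It remains to bound the right-hand sum globally, and here Vojta's argument for Theorem~B applies. Replacing $\eta_1(F),\dots,\eta_{M_0}(F)$ by their generalised Wronskian $W_{\mathcal B_\alpha}(F)$ costs only logarithmic-derivative terms $R_\alpha$ of the ratios $\eta_l(F)/\|F\|^{N_0\Delta}$, and passing to one fixed adapted basis $\mathcal B_0$ multiplies the Wronskian by a nonzero constant, so that the sum is $\le\log\bigl(\|F\|^{M_0N_0\Delta}/|W_{\mathcal B_0}(F)|\bigr)+R_\alpha+O(1)$ with the first term one global function for all pieces. Integrating over each $E_\alpha$, summing over the (disjoint) pieces, using Jensen's formula---$\int_{S(r)}\log\bigl(\|F\|^{M_0N_0\Delta}/|W_{\mathcal B_0}(F)|\bigr)\sigma_p\le M_0N_0\Delta\,T_f(r)+O(1)$, the omitted Wronskian zero-counting term being nonnegative---and estimating $\sum_\alpha\int_{E_\alpha}R_\alpha\,\sigma_p\le S_f(r)$ by the logarithmic derivative lemma on $\C^p$ (each $\eta_l(F)$ having characteristic $\le N_0\Delta\,T_f(r)+O(1)$), one obtains
$$\bigl|\bigl|\ \int_{S(r)}\max_{K\in\mathcal K}\log\prod_{j\in K}\frac{\|f(x)\|^{\Delta}\|Q_j\|}{|Q_j(f(x))|}\,\sigma_p\ \le\ \Delta(m-n+1)(n+1)\bigl(1+o_{N_0}(1)\bigr)T_f(r)+S_f(r)+O(1).$$
Choosing $N_0$ large enough that $o_{N_0}(1)\cdot\Delta(m-n+1)(n+1)<\epsilon/2$ and absorbing $S_f(r)+O(1)$ into $\tfrac{\epsilon}{2}T_f(r)$---legitimate because $f$, being algebraically nondegenerate into a variety of positive dimension, is nonconstant, so $T_f(r)\to\infty$---gives the theorem. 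The step I expect to be the main obstacle is the second one: carrying out the filtration and Chow-weight bookkeeping so that the sharp denominator $m-n+1$ of Theorem~\ref{1.4} really emerges while the padding forms $P_k,\dots,P_m$ contribute nothing to the left side, all the while keeping the analytic remainder $R_\alpha$ purely of logarithmic-derivative type. The Vojta device of passing to the Wronskian of a single fixed basis is what prevents the finite partition into the $E_\alpha$ from inflating $T_f(r)$ by the number of pieces, and is why the first main theorem alone does not suffice.
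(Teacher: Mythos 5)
Your sketch actually re-derives the general-form inequality of Theorem~\ref{1.12} rather than deducing Corollary~\ref{1.13} from it, which is what the paper does. You stop at the proximity-function bound; the ``routine'' step from there to the corollary --- sort the $|Q_j(\tilde f(z))|$ pointwise, observe that the $m+1$ smallest indices form a set in $\mathcal K$ while the remaining $\log\frac{\|\tilde f\|^\Delta\|Q_j\|}{|Q_j(\tilde f)|}$ are $O(1)$ by $m$-subgeneral position and the Nullstellensatz, integrate, apply the first main theorem, and finally reduce unequal degrees to a common $\Delta=\mathrm{lcm}(d_i)$ by replacing each $Q_i$ with $Q_i^{\Delta/d_i}$ --- is omitted but genuinely standard, so this is only an incompleteness of the sketch, not a gap.

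Your route to the general form, on the other hand, is genuinely different from the paper's. The paper maps $V$ once and for all to $Y=\Phi(V)\subset\P^{q-1}$ via all $q$ hypersurfaces, works with the Hilbert weight $S_Y(u,\c_z)$ and the Evertse--Ferretti comparison (Theorem~\ref{2.12new}), and never partitions $\C^p$: the point-dependence of the monomial basis enters only through the finite family $\mathcal L$ of linear forms $L_{i,z}$, which the max over independent $\mathcal J\subset\mathcal L$ in Vojta's Theorem~B absorbs automatically. You instead partition $\C^p$ by the ordering of the $|Q_j(f(x))|$, pad each chosen $K_\alpha$ to an $(m+1)$-tuple $P_0,\ldots,P_m$ with empty common zero set on $V$, map to $\P^m$, and run the Corvaja--Zannier filtration piece by piece, recombining via a single fixed Wronskian. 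Both proofs pivot on Theorem~\ref{1.4} for the constant $(m-n+1)(n+1)$ and on Vojta's Theorem~B, and both arrive at the same bound; but the filtration bookkeeping that you yourself flag as the main obstacle --- converting the uniform-weight bound $\sum_l(a^{(l)}_0+\cdots+a^{(l)}_{k-1})\gtrsim kM_0N_0/((m-n+1)(n+1))$ into the weighted pointwise estimate via summation by parts on the sorted $\lambda_j$ --- is precisely the content the paper outsources to the cited Evertse--Ferretti Hilbert-weight theorem, which is why the paper's packaging is appreciably shorter. Two details you should spell out to close the sketch: the change of basis from $\mathcal B_\alpha$ to $\mathcal B_0$ multiplies the Wronskian by a nonzero constant only because every $\mathcal B_\alpha$ is a basis of the \emph{same} space $W_{N_0}$ (only the filtration, not the space, varies with $\alpha$); and since there are only finitely many pieces $E_\alpha$, the piecewise logarithmic-derivative remainders $R_\alpha$ still sum to an $S_f(r)$. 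With these in place the argument goes through and is correct.
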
 
Then, we get again the version non truncated second main theorem for meromorphic mappings intersecting hypersurfaces in subgeneral position. 
We note that the general form of SMT will immediately implies the SMT but not vice versa. 

Finally, we would also like to propose the following question: ``\textit{Is there any way to prove SMT for moving hypersurfaces in subgeneral position by applying Theorem \ref{1.2} as Min Ru did for the case of moving hyperplanes in \cite{Ru97}}?''

If the answer of the above question is affirmative then we hope to find a simple proof for SMT for moving hypersurfaces in subgeneral position.

\section{Below Bound for Chow wieght}
In this section, we will give the proof for Theorem \ref{1.4}. Firstly, we recall some following.

\vskip0.2cm
\noindent
\textbf{2.1. Chow form and Chow weight.}

Let $K$ be a number field or $K=\C$. Let $X\subset\P^N$ be a projective subvariety of dimension $n$ and degree $D$, defined over $K$.  The Chow form associated to $X$ is a polynomial
$$F_X(\textbf{u}_0,\ldots,\textbf{u}_n) = F_X(u_{00},\ldots,u_{0N};\ldots; u_{n0},\ldots,u_{nN})$$
in $n+1$ blocks of variables $\textbf{u}_i=(u_{i0},\ldots,u_{iN}), i = 0,\ldots,n,$ which satisfies the following
properties: 
\begin{itemize}
\item $F_X$ is irreducible in ${\overline{\Q}}[u_{00},\ldots,u_{nN}]$,
\item $F_X$ is homogeneous of degree $D$ in each block $\textbf{u}_i, i=0,\ldots,n$,
\item $F_X(\textbf{u}_0,\ldots,\textbf{u}_n) = 0$ if and only if $X({\overline{\Q}})\cap H_{\textbf{u}_0}\cap\cdots\cap H_{\textbf{u}_n}\ne\varnothing$, where $H_{\textbf{u}_i}, i = 0,\ldots,n$, are the hyperplanes given by $u_{i0}x_0+\cdots+ u_{iN}x_N=0.$
\end{itemize}
Let ${\bf c}=(c_0,\ldots, c_N)$ be a tuple of real numbers. Let $t$ be an auxiliary variable. We consider the decomposition
\begin{align}\label{2.1}
\begin{split}
F_X(t^{c_0}u_{00},&\ldots,t^{c_N}u_{0N};\ldots ; t^{c_0}u_{n0},\ldots,t^{c_m}u_{nm})\\ 
& = t^{e_0}G_0(\textbf{u}_0,\ldots,\textbf{u}_m)+\cdots +t^{e_r}G_r(\textbf{u}_0,\ldots, \textbf{u}_m).
\end{split}
\end{align}
with $G_0,\ldots,G_r\in K[u_{00},\ldots,u_{0m};\ldots; u_{n0},\ldots,u_{nm}]$ and $e_0>e_1>\cdots>e_r$. The Chow weight of $X$ with respect to ${\bf c}$ is defined by
\begin{align}\label{2.2}
e_X({\c}):=e_0.
\end{align}
The following theorem is due to J. Evertse and R. Ferretti \cite{EveFer02}.
\begin{theorem}[{\cite[Theorem 4.1]{EveFer02}, see also \cite[Theorem 2.1]{Ru09}}]\label{2.12new}
Let $X\subset\P^N$ be an algebraic variety of dimension $n$ and degree $D$ defined over the field $K$. Let $u>D$ be an integer and let ${\bf c}=(c_0,\ldots,c_N)\in\mathbb R^{N+1}_{\geqslant 0}$.
Then
$$ \frac{1}{uH_X(u)}S_X(u,{\bf c})\ge\frac{1}{(N+1)D}e_X({\bf c})-\frac{(2N+1)D}{u}\cdot\left (\max_{i=0,...,N}c_i\right). $$
\end{theorem}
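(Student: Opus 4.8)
The plan is to follow the argument of Evertse and Ferretti: one recognizes $S_X(u,{\bf c})$ and $e_X({\bf c})$ as, respectively, a Hilbert-point weight and a Chow-point weight of $X$ for the one-parameter subgroup attached to ${\bf c}$, and then compares the two as in Mumford's analysis of stability, keeping all error terms explicit.

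\emph{Reductions.} Both sides of the asserted inequality are homogeneous of degree one in ${\bf c}$ and continuous in ${\bf c}$, so we may assume ${\bf c}=(c_0,\ldots,c_N)\in\Z^{N+1}_{\geq0}$. Put $\lambda(t)=\mathrm{diag}(t^{c_0},\ldots,t^{c_N})$ and replace $X$ by its flat limit $X_0=\lim_{t\to0}\lambda(t)\cdot X$, cut out by the ${\bf c}$-initial ideal of $I_X$. Passing to $X_0$ leaves $e_X({\bf c})$ unchanged — it is the $\lambda$-weight of the leading component $G_0$ in (\ref{2.1}), i.e. of the Chow form of $X_0$ — leaves $H_X(u)$ unchanged by flatness, and one checks that $S_X(u,{\bf c})$ is unchanged as well; so we may assume $X$ is $\lambda$-invariant. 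Then $(I_X)_u$ and $R_u:=(K[x_0,\ldots,x_N]/I_X)_u$ split into $\lambda$-weight spaces, every monomial basis of $R_u$ is a weight basis, $S_X(u,{\bf c})$ equals the total $\lambda$-weight $\sum_w w\dim_K(R_u)_w$, and $e_X({\bf c})$ is the single weight by which $F_X$ is scaled under $u_{ij}\mapsto t^{c_j}u_{ij}$. The claim then reads $(N+1)D\,S_X(u,{\bf c})\ge uH_X(u)\,e_X({\bf c})-(2N+1)D^2H_X(u)\max_i c_i$.

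\emph{The reference case $X=\P^N$ and the role of $N+1$.} Here $D=1$, $F_X=\det(u_{ij})$ so $e_{\P^N}({\bf c})=c_0+\cdots+c_N$, $H_{\P^N}(u)=\binom{N+u}{N}$, and the full monomial basis of $R_u$ gives, by symmetry, $S_{\P^N}(u,{\bf c})=\tfrac{u}{N+1}\binom{N+u}{N}(c_0+\cdots+c_N)$, so that $\tfrac1{uH_{\P^N}(u)}S_{\P^N}(u,{\bf c})=\tfrac{e_{\P^N}({\bf c})}{N+1}$ with no error. For a proper $\lambda$-invariant subvariety, Mumford's theorem gives that $u\mapsto\sum_w w\dim(R_u)_w$ agrees, for $u$ large, with a polynomial in $u$ of degree $n+1$ and leading coefficient $e_X({\bf c})/(n+1)!$; together with $H_X(u)=\tfrac{D}{n!}u^n+O(u^{n-1})$ this gives $\tfrac1{uH_X(u)}S_X(u,{\bf c})\to\tfrac{e_X({\bf c})}{(n+1)D}$ — a strictly stronger bound, but valid only once $u$ exceeds the Castelnuovo--Mumford regularity of $X$, not merely for $u>D$. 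Replacing the sharp $n+1$ by the ambient $N+1$ is precisely what makes a bound valid for every $u>D$ reachable, using the $\P^N$-identity as a uniform envelope.

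\emph{Effectivization; the main obstacle.} The essential point is to upgrade the \emph{asymptotic} identity above to the stated inequality valid for every $u>D$, with the explicit error $\tfrac{(2N+1)D}{u}\max_i c_i$. A naive induction on $n=\dim X$ does not achieve this: for a generic hyperplane section $X'=X\cap H$ of dimension $n-1$ and degree $D$, the exact sequence $0\to R(X)_{u-1}\xrightarrow{\cdot\ell}R(X)_u\to R(X')_u\to0$ yields $H_X(u)=\sum_{v\le u}H_{X'}(v)$ and $e_X({\bf c})-e_{X'}({\bf c})\in[0,D\max_i c_i]$, but since $\sum_{v\le u}vH_{X'}(v)\sim\tfrac{n}{n+1}\,uH_X(u)$, any attempt to assemble $S_X(u,{\bf c})$ out of the $S_{X'}(v,{\bf c})$ through this sequence reproduces only the fraction $\tfrac{n}{n+1}$ of the required main term $\tfrac{uH_X(u)}{(N+1)D}e_X({\bf c})$, so in this form the induction loses a constant factor at each step. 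The resolution, following Evertse and Ferretti, is to estimate the deviation of both $H_X(u)$ and the weighted Hilbert function $\sum_w w\dim(R_u)_w$ from their eventual polynomials by B\'ezout-type bounds depending only on $D=\deg X$ — hence effective already from $u>D$ on — and to combine these with the exact $\P^N$-identity; it is the slack between the ambient $N+1$ and the sharp $n+1$ that lets this combination produce a bound valid for all $u>D$, with total error at most $(2N+1)D^2H_X(u)\max_i c_i$ and free of any factor of $u$. Managing this uniform bookkeeping, rather than any single deep ingredient, is where the difficulty lies.
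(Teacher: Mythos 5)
This statement is imported by the paper verbatim from Evertse--Ferretti (\cite[Theorem 4.1]{EveFer02}; see also \cite[Theorem 2.1]{Ru09}) and no proof of it appears in the text, so there is no internal argument to compare yours against; I can only assess the proposal on its own terms, and on those terms it is an accurate road map rather than a proof. Your reductions (homogeneity and continuity in ${\bf c}$, passage to the initial degeneration, invariance of $e_X({\bf c})$, $H_X(u)$ and $S_X(u,{\bf c})$ under that degeneration), the exact computation for $X=\P^N$, and the appeal to Mumford's asymptotic $\sum_w w\dim(R_u)_w\sim e_X({\bf c})\,u^{n+1}/(n+1)!$ are all essentially correct, and your diagnosis that a naive hyperplane-section induction loses a factor $n/(n+1)$ at each step is a genuine and well-observed obstruction. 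But everything you actually establish yields the inequality only asymptotically, for $u$ beyond a regularity threshold you do not control, whereas the entire content of the theorem is the explicit error term $\frac{(2N+1)D}{u}\max_i c_i$ valid for every $u>D$.

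That effective step is precisely the one you do not carry out: you assert that ``B\'ezout-type bounds depending only on $D$'' on the deviation of the weighted and unweighted Hilbert functions from their Hilbert polynomials, combined with the exact $\P^N$ identity, close the gap --- but you never state those bounds, prove them, or perform the bookkeeping that produces the specific constants $N+1$ and $2N+1$. Since you yourself flag this as ``where the difficulty lies,'' the proposal stops exactly where the proof would have to begin; to complete it you would need to reproduce the actual estimates of \cite{EveFer02} or supply an independent effective argument. A small additional slip: after clearing denominators the inequality reads $(N+1)D\,S_X(u,{\bf c})\ge uH_X(u)\,e_X({\bf c})-(N+1)(2N+1)D^2H_X(u)\max_i c_i$; your restatement drops the factor $N+1$ from the error term and is therefore a strictly stronger claim than the theorem, which would only make the missing step harder.
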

\noindent
\textbf{2.2. Proof of Theorem \ref{1.4}.} 

Without loss of generality, we may suppose that $i_0=0,...,i_m=m$ and 
$$c_0\ge c_1\ge\cdots\ge c_m.$$
It is easy to see that 
$$\mathrm{dim}\left(Y({\overline{\Q}})\cap\{y_0=\cdots=y_k=0\}\right)\le m-k-1\ \text{ for all } k=m-n+1,...,m.$$

Using the idea ``replacing hypersurfaces'' from \cite{Q2017}, we will construct $n$ new affine coordinate functions of $\P^R(\C)$ as follows:

\noindent
$\bullet$ Step 1. Set $y_0'=y_0$. We also set $c_{00}=1$ and $c_{0k}=0$ for all $0<k\le R$.

\noindent
$\bullet$ Step 2. For each irreducible component $\Gamma$ of dimension $(n-1)$ of $Y({\overline{\Q}})\cap\{y_0'=0\}$, set 
$$V_{1\Gamma}=\left\{c=(c_1,...,c_{m-n+1})\in {\Q}^{m-n+1}\ ;\ \Gamma\subset \{c_1y_1+\cdots c_{m-n+1}y_{m-n+1}=0\}\right\}.$$
Then $V_{1\Gamma}$ is a linear subspace of ${\Q}^{m-n+1}$. Since $Y({\overline{\Q}})\cap\{y_0'=0\}\cap\{y_1=\cdots=y_{m-n+1}=0\}$ has the dimension of at most $n-2$, there exists $i\in\{1,...,m-n+1\}$ such that $\Gamma\not\subset \{y_i=0\}$. Therefore $V_{1\Gamma}$ is a proper linear subspace of ${\Q}^{m-n+1}$. Since there are only finite many irreducible components with dimension $(n-1)$ of $Y({\overline{\Q}})\cap\{y_0'=0\}$, 
$${\Q}^{m-n+1}\setminus\bigcup_{\Gamma}V_{1\Gamma}\ne\emptyset. $$
Hence, there exists $(c_{11},...,c_{1(m-n+1)})\in {\Q}^{m-n+1}$ such that
$$\Gamma\not\subset \{c_{11}y_1+\cdots+c_{1(m-n+1)}y_{m-n+1}=0\}$$
for every irreducible component $\Gamma$ of dimension $(n-1)$ of $Y({\overline{\Q}})\cap\{y_0'=0\}$. We set
$$ y_1'=c_{11}y_1+\cdots+c_{1(m-n+1)}y_{m-n+1}.$$
This clearly implies that $\dim Y({\overline{\Q}})\cap\{y_0'=y_1'=0\}\le n-2.$
We also set $c_{10}=0$ and $c_{1k}=0$ for all $m-n+1<k\le R$.

\noindent
$\bullet$ Step 3. For every irreducible component $\Gamma'$ of dimension $(n-2)$ of $Y({\overline{\Q}})\cap\{y_0'=y_1'=0\}$, we set 
$$V_{2\Gamma'}=\left\{c=(c_1,...,c_{m-n+2})\in {\Q}^{m-n+2}\ ;\ \Gamma\subset \{c_1y_1+\cdots c_{m-n+2}y_{m-n+2}=0\}\right\}.$$
Hence, $V_{2\Gamma'}$ is a linear subspace of ${\Q}^{m-n+2}$. Since $Y({\overline{\Q}})\cap\{y_0'=y'_1=0\}\cap\{y_1=\cdots=y_{m-n+2}=0\}$ has the dimension of at most $n-3$, there exists $i\in\{1,...,m-n+2\}$ such that $\Gamma\not\subset \{y_i=0\}$. Therefore $V_{2\Gamma'}$ is a proper linear subspace of ${\Q}^{m-n+2}$. Since there are only finite many irreducible components with dimension $(n-2)$ of $Y({\overline{\Q}})\cap\{y_0'=y_1'=0\}$, 
$${\Q}^{m-n+2}\setminus\bigcup_{\Gamma'}V_{2\Gamma'}\ne\emptyset. $$
Hence, there exists $(c_{21},...,c_{2(m-n+2)})\in {\Q}^{m-n+2}$ such that
$$\Gamma\not\subset \{c_{21}y_1+\cdots+c_{2(m-n+1)}y_{m-n+2}=0\}$$
for all irreducible components of dimension $(n-2)$ of $Y({\overline{\Q}})\cap\{y_0'=y_1'=0\}$. We set
$$ y_2'=c_{21}y_1+\cdots+c_{2(m-n+2)}y_{m-n+2}.$$
This clearly implies that $\dim Y({\overline{\Q}})\cap\{y_0'=y_1'=y_2'=0\}\le n-3.$
We also set $c_{20}=0$ and $c_{2k}=0$ for all $m-n+2<k\le R$.

Repeating again the above step, after the $n^{th}-$step we get $\c_i=(c_{i0},...,c_{iR})\ (0\le i\le n)$, $y_0',...,y_n'$ satisfying:
\begin{align*}
& c_{00}=1,c_{0k}=0\text{ for all } 0< k\le R,\\ 
& c_{i0}=0 \text{ and } c_{ik}=0\text{ for all }1\le i\le n\text{ and }m-n+i< k\le R,\\
& y_t'=c_{t0}y_0+\cdots +c_{tm}y_m\text{ for all }0\le t\le n.\\
& \dim Y({\overline{\Q}})\cap\{y_0'=\cdots=y_t'=0\}\le n-t-1, \ 0\le t\le n.
\end{align*}
In particular, 
\begin{align}\label{2.3}
Y({\overline{\Q}})\cap\{y_0'=\cdots=y_n'=0\}.
\end{align}

For each subset $I=\{k_0,\ldots,k_n\}$ of $\{0,\ldots,R\}$ with $k_0<k_1\cdots<k_n$, we define the bracket
$$ [I]=[I](\u^{(0)},\ldots,\u^{(n)}):=\det\left(u^{(i)}_{k_j}\right)_{0\le i,j\le n}, $$
where $\u^{(h)}=(u^{(h)}_0,\ldots,u^{(h)}_R)$. Let $I_1,...,I_s$ be all subsets of $\{0,\ldots,R\}$ of cardinality $(n+1)$, where $s=\binom{R+1}{n+1}$. Let $A$ be the set of tuples of nonnegagative integers $\a=(a_1,\ldots,a_s)$ with $a_1+\cdots+a_s=D$. Then the Chow form $F_Y$ can be written as a homogeneous polynomial of degree $D$ in $[I_1],\ldots,[I_s]$:
\begin{align}\label{2.4}
F_Y=\sum_{\a\in A}C(\a)[I_1]^{a_1}\ldots[I_s]^{a_s}, 
\end{align}
where $\a=(a_1,...,a_s)$ and $C(\a)\in A$ (see Theorem IV in \cite{Hog52}). We note that
$$ [I](t^{c_0}u^{(0)}_0,\ldots,t^{c_R}u_R^{(0)};\ldots;t^{c_0}u^{(n)}_0,\ldots,t^{c_R}u_R^{(n)})=t^{\sum_{i\in I}c_i}[I]. $$
Therefore, we have
\begin{align}\label{2.5}
\begin{split}
F_Y&(t^{c_0}u^{(0)}_0,\ldots,t^{c_R}u_R^{(0)};\ldots;t^{c_0}u^{(n)}_0,\ldots,t^{c_R}u_R^{(n)})\\ 
& =\sum_{\a\in A}C(\a)t^{\sum_{j=1}^sa_j(\sum_{i\in I_j}c_i)}[I_1]^{a_1}\cdots[I_s]^{a_s}.
\end{split}
\end{align}

Now, for each $I=\{k_0,\ldots,k_n\}\subset\{0,\ldots,R\}$ with $k_0<k_1\cdots<k_n$, we have
$$[I](\c_0,...,\c_n)=\det\left(c_{ik_j}\right)_{0\le i,j\le n}.$$
We see that if $k_0>0$ then all elements in the first row of the matrix $\left(c_{ik_j}\right)_{0\le i,j\le n}$ are zeros; if there exists an index $j\ge 1$ such that $k_j> m-n+j$ then $k_n\ge m$ and hence all elements in the $(n+1)^{th}$ comlumn of the matrix $\left(c_{ik_j}\right)_{0\le i,j\le n}$ are zeros. Therefore if $[I](\c_0,...,\c_n)\ne 0$ then $k_0=0$ and $k_j\le m-n+j$ for all $1\le j\le n$.

Now by (\ref{2.3}) we have $F_Y(\c_0,...,\c_n)\ne 0$. Hence in the expression (\ref{2.4}) there is a term $C[I_1]^{a_1}\cdots[I_s]^{a_s}$ with $C\in\overline{\Q}^*$, $a_1+\cdots+a_s=D$, $I_j=\{k_{j0},\ldots,k_{jn}\}\ (1\le j\le s)$ such that $k_{j0}=0<k_{j1}<\cdots <k_{jn}$ and $k_{jl}\le m-n+l$. We substitute $\u^{(i)}=\c_i\ (i=0,...,n)$ in (\ref{2.5}) and obtain $C\cdot t^{\sum_{j=1}^sa_j(\sum_{i\in I_j}c_i)}$. Therefore, one of the numbers $e_i$ in (\ref{2.1}) is equal to $\sum_{j=1}^sa_j(\sum_{i\in I_j}c_i)$. We have the following estimate
\begin{align*}
\sum_{j=1}^sa_j(\sum_{i\in I_j}c_i)&=\sum_{j=1}^sa_j(\sum_{i=0}^{n}c_{k_{ji}})\ge \sum_{j=1}^sa_j(c_0+\sum_{i=1}^nc_{m-n+i})=D(c_0+\sum_{i=1}^nc_{m-n+i})\\
&\ge\dfrac{D}{m-n+1}\left((m-n+1)c_0+\sum_{i=1}^nc_{m-n+i}\right)\ge\dfrac{D}{m-n+1}\sum_{i=0}^mc_i.
\end{align*}
 This completes the proof of the theorem.\hfill$\square$

\section{Quantitative Schmidt's subspace theorem}

In this section, we will proof Theorem \ref{1.5}. We separate this section into two parts. In part A, we will recall some notion and results from \cite{EveFer02,EveFer08}. The proof of Theorem \ref{1.5} will be included in Part B.

\vskip0.2cm
\noindent
\textbf{Part A. Twisted hight and Hilbert weight.}

\vskip0.2cm
\noindent
\textbf{3.1.} Let $K$ be a number field and $L$ be a finite extension of $K$. If $w$ is a place of $L$ which lies above a place $v$ of $K$, then
\begin{align}\label{3.1}
|x|_w=|x|^{d(w|v)}_v, \text{ for } x\in K\text{ with } d(w|v)=\frac{[L_w : K_v]}{[L:K]}.
\end{align}
For $v\in M_K$, let $\c_v=(c_{0v},\ldots, c_{Rv})$ be a tuple of reals such that $c_{0v}=\cdots=c_{Rv}=0$ for all but finitely many places $v\in M_K$ and put $\c=(c_v\ :\ v\in M_K)$. Further, let $Q\ge 1$ be a real. 
For $\y=(y_0,\ldots,y_R)\in\P^R(K)$, we define
$$H_{Q,\c}(\y):=\prod_{v\in M_K}\max\limits_{0\le i\le R}\left (|y_i|_vQ^{c_{iv}}\right).$$
By the product formula, this is well-defined on $\P^R(K)$. For any finite extension $L$ of $K$ we put
\begin{align}\label{3.2}
c_{iw}:=c_{iv}.d(w|v) \text{ for } w\in M_L,
\end{align}
where $M_L$ is the set of places of $L$ and $v$ is the place of $K$ lying below $w.$ Then, for the general case when $\y\in\P^R({\overline{\Q}})$, we define its twisted height by
\begin{align}\label{3.3}
H_{Q,\c}(\y):=\prod_{w\in M_L}\max\limits_{0\le i\le R}\left(|y_i|_wQ^{c_{iw}}\right),
\end{align}
where $L$ is any finite extension of $K$ such that $y\in \P^R(L)$. By (\ref{3.1}) and (\ref{3.2}), we see that this definition does not depend on $L$.

\noindent
\textbf{3.2.} We recall some results of Everste and Ferreti \cite{EveFer08} for twisted heights as follows. Let $Y$ be a projective subvariety of $\P^R$ of dimension $n\ge 1$ and degree $D$, defined over $K$, and let $c_v=(c_{0v},\ldots,c_{Rv})\ (v\in M_K)$ be tuples of reals such that
\begin{align}
\label{3.4}
&c_{iv}\ge 0\text{ for }v\in M_K,i=0,\ldots , R,\\ 
\label{3.5}
&c_{0v}=\cdots=c_{Rv}=0 \text{ for all but finitely many }v\in M_K,\\
\label{3.6}
&\sum_{v\in M_K}\max\{c_{0,v},\ldots,c_{R,v}\}\le 1. 
\end{align}
Put
\begin{align}\label{3.7}
E_Y({\bf c}):=\frac{1}{(n+1)D}\left (\sum_{v\in M_K}e_Y({\bf c}_v)\right),
\end{align}
where $e_Y({|{\bf c}|}_v)$ is the Chow weight defined in Section 2.

Further, let $0<\delta\le 1$, and put
\begin{align}\label{3.8}
\begin{split}
\begin{cases}
B_1&:=\mathrm{exp}(2^{10n+4}\delta^{-2n}D^{2n+2}).\log (4R)\log\log (4R),\\
B_2&:=(4n+3)D\delta^{-1},\\
B_3&:=\exp(2^{5n+4}\delta^{-n-1}D^{n+2}\log (4R)).
\end{cases}
\end{split}
\end{align}
\begin{theorem}[see Theorem 2.1 \cite{EveFer08}]\label{3.9}
There are homogeneous polynomials $F_1,\ldots,F_t\in K[y_0,\ldots,y_R]$ with
$$t\le  B_1, \deg F_i\le  B_2 \text{ for }i = 1,\ldots,t,$$
which do not vanish identically on $Y,$ such that for every real number $Q$ with
$$\log Q \ge B_3.(h(Y)+1)$$
there is $F_i\in\{F_1,\ldots,F_t\}$ with
\begin{align}\label{3.10}
\left\{y\in Y({\overline{\Q}})\ :\ H_{Q,\c}({\bf y})\le Q^{E_Y({\bf c})-\delta}\right\}\subset Y\cap\{F_i=0\}.
\end{align}
\end{theorem}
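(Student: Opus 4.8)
The plan is to reduce Theorem \ref{3.9} to the (parametric) quantitative subspace theorem for \emph{linear} forms by passing to a Veronese-type embedding of $Y$ of a carefully chosen degree $u$, and transporting the twisted-height hypothesis through the Chow weight / Hilbert weight comparison of Theorem \ref{2.12new}.

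First I would fix an integer $u$ of size comparable to $(4n+3)D\delta^{-1}$ --- this choice is what eventually yields the degree bound $B_2$ --- and consider the $\overline{\Q}$-vector space $V_u$ of restrictions to $Y$ of degree-$u$ forms in $y_0,\ldots,y_R$, of dimension $M:=H_Y(u)$. A fixed basis $\varphi_1,\ldots,\varphi_M$ of $V_u$ gives an embedding $\mathrm{Ver}_u\colon Y\hookrightarrow\P^{M-1}$, $\y\mapsto(\varphi_1(\y):\cdots:\varphi_M(\y))$. For each place $v\in M_K$ I would instead pick a basis $\varphi_{1,v},\ldots,\varphi_{M,v}$ of $V_u$ adapted to the filtration of $V_u$ by the $\c_v$-weight of monomials (obtained by Gaussian elimination with respect to that weighted monomial order), so that the sum of the weights of the chosen basis equals the Hilbert weight $S_Y(u,\c_v)$; crucially, these bases can be taken with coefficients in $K$ of height bounded purely in terms of $h(Y)$ and $u$, since $I(Y)_u$ is spanned by forms of height controlled by $h(Y)$. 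Writing each $\varphi_{i,v}$ in the fixed basis gives linear forms $L_{i,v}$ on $\P^{M-1}$, defined over $K$ and of controlled height.

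Evaluating at $\mathrm{Ver}_u(\y)$ and unwinding the definition of $H_{Q,\c}$, an exceptional point $\y$ --- one with $H_{Q,\c}(\y)\le Q^{E_Y(\c)-\delta}$ --- is seen to satisfy a twisted linear inequality of the schematic form
$$\prod_{v\in M_K}\ \frac{\prod_{i=1}^{M}|L_{i,v}(\mathrm{Ver}_u(\y))|_v\,Q^{w_{i,v}}}{\|\mathrm{Ver}_u(\y)\|_v^{\,M}\,Q^{\,\max_i w_{i,v}}}\ \le\ Q^{\,-\,\frac{\delta}{2}\,u\,M},$$
where $w_{i,v}$ is the weight of $\varphi_{i,v}$, so $\sum_i w_{i,v}=S_Y(u,\c_v)$. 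Here one uses Theorem \ref{2.12new} at each $v$ to pass from $S_Y(u,\c_v)$ to $\tfrac{u\,M}{(n+1)D}\,e_Y(\c_v)$ with error $\tfrac{(2R+1)D}{u}\max_i c_{iv}$, then sums over $v$, invokes $(\ref{3.6})$ and the definition $(\ref{3.7})$ of $E_Y(\c)$, and checks that for the chosen $u$ the accumulated $O(1/u)$ error is absorbed into $\delta/2$. Now the parametric absolute quantitative subspace theorem for linear forms (in the spirit of Evertse--Schlickewei \cite{EveSch02}) applies to the points $\mathrm{Ver}_u(\y)$ in $\P^{M-1}$: once $\log Q$ exceeds a threshold determined by $M$ and the heights of the $L_{i,v}$ --- hence a threshold of the form $B_3(h(Y)+1)$ --- all such $\mathrm{Ver}_u(\y)$ lie in one proper linear subspace of $\P^{M-1}$, chosen from a list of at most $t$ subspaces all defined over $K$. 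Pulling these subspaces back along $\mathrm{Ver}_u$ produces $F_1,\ldots,F_t\in K[y_0,\ldots,y_R]$ of degree $u$, not identically zero on $Y$; and tracking the constants of the linear subspace theorem in terms of $M\le\binom{u+R}{R}$ and $u$ yields precisely $t\le B_1$, $\deg F_i\le B_2$, and the threshold $B_3$ of $(\ref{3.8})$.

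The main obstacle is the uniform control of the data across all places: one must supply, for every $v$, a basis of $V_u$ realizing the Hilbert weight $S_Y(u,\c_v)$ whose coefficient heights depend only on $h(Y)$ and $u$, so that the heights of the linear forms $L_{i,v}$ feed into the subspace-theorem threshold only through $h(Y)$ --- this is exactly what forces the hypothesis $\log Q\ge B_3(h(Y)+1)$ and the exponential shape of $B_1$ and $B_3$. A secondary but quantitatively decisive point is the calibration of $u$: it must be large enough for the $O(1/u)$ error in Theorem \ref{2.12new} to be dominated by $\delta/2$, yet small enough that $\deg F_i=u\le B_2=(4n+3)D\delta^{-1}$, and verifying that $u\asymp(4n+3)D\delta^{-1}$ meets both constraints is the heart of the bookkeeping.
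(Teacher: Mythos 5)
This statement is not proved in the paper at all: it is quoted verbatim as Theorem~2.1 of Evertse--Ferretti \cite{EveFer08} and used as a black box in the proof of Theorem \ref{1.5}, so there is no internal proof to compare against. Your sketch is essentially a faithful reconstruction of the original Evertse--Ferretti argument (degree-$u$ Veronese-type embedding, bases of $(\overline{\Q}[y_0,\ldots,y_R]/I_Y)_u$ realizing the Hilbert weight $S_Y(u,\c_v)$ at each place, the Chow--Hilbert weight comparison of Theorem \ref{2.12new} with $u\asymp D\delta^{-1}$, and then the quantitative Absolute Parametric Subspace Theorem of Evertse--Schlickewei), so the strategy is the right one. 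One simplification worth noting: by the definition of the Hilbert weight, the weight-realizing bases can be taken to consist of residue classes of \emph{monomials}, so the linear forms $L_{i,v}$ have coefficients $0$ or $1$ and the height-control issue you single out as the main obstacle does not actually arise; the quantity $h(Y)$ enters instead through the height of the Veronese image (controlled via estimates of the type of Proposition \ref{3.14}) and hence through the threshold $B_3(h(Y)+1)$.
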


\noindent
\textbf{3.3.} For each place $v\in M_K$, we choose a normalized absolute $|.|_v$ as in Section 1 and choose its an extension to ${\overline{\Q}}$. In particular, for each $v\in M_K^\infty$, there is an isomorphic embedding $\sigma_v:{\overline{\Q}}\rightarrow\C$ such that
$$|x|_v=|\sigma_v(x)|^{[K_v:\R]/[K:\Q]}\text{ for }x\in{\overline{\Q}}.$$
For a polynomial $f$, we write $f=\sum_{m\in M_f}c_f(m)m$, where the symbol $m$ denotes a monomial, $M_f$ is a finite set of monomials, and $c_f(m)\ (m\in M_f)$ are the coefficients. For any map $\sigma$ on the definition field of $f$, we put
$$ \sigma (f):=\sum_{m\in M_f}\sigma (c_f(m))\cdot m .$$
Let $f_i=\sum_{m\in M_{f_i}}c_{f_i}(m)\cdot m\ (i=1,\ldots,r)$ be $r$ polynomials with complex coefficients. We define the following norms:
$$||f_1,\ldots,f_r||:=\max\left (|c_{f_i}(m)|\ : 1\le i \le r,m\in M_{f_i}\right ),$$
$$||f_1,\ldots, f_r||_1:=\sum_{i=1}^r\sum_{m\in M_{f_i}}|c_{f_i}(m)|.$$
If all coefficients of $f_i\ (i=1,...,r)$ belong to ${\overline{\Q}}$, we define
\begin{align}\label{3.11}
\begin{split}
\begin{cases}
&||f_1,\ldots, f_r||_v:=\max\left(|c_{f_i}(m)|_v\ :\ 1\le i \le r,m\in M_{f_i}\right)\ (v\in M_K),\\
&||f_1,\ldots , f_r||_{v,1}:= ||\sigma_v(f_1),\ldots ,\sigma_v(f_r)||_1^{[K_v:R]/[K:Q]}\  (v\in M^\infty_K),\\
&||f_1,\ldots, f_r||_{v,1}:=||f_1,\ldots ,f_r||_v\ (v\in M^0_K).
\end{cases}
\end{split}
\end{align}
If all coefficients of $f_i\ (i=1,...,r)$ belong to $K$, we may define heights
$$h(f_1,\ldots, f_r):=\log\left(\prod_{v\in M_K}||f_1,\ldots, f_r||_v\right ),$$
$$h_1(f_1,\ldots, f_r):=\log\left(\prod_{v\in M_K}||f_1,\ldots, f_r||_{v,1}\right ),$$
More generally, for polynomials $f_1,\ldots,f_r$ with coefficients in ${\overline{\Q}}$, we  choose a number field $K$ containing the coefficients of $f_1,\ldots,f_r$ and define the weights $h(f_1,\ldots,f_r)$, $h_1(f_1,\ldots,f_r)$ as above. We see that this is definition independent of the choice of $K$.

We list here some following facts
\begin{itemize}
\item For $x\in{\overline{\Q}}^{N+1}$ and $f\in{\overline{\Q}}[x_0,\ldots, x_N]$ homogeneous of degree $D$, 
\begin{align}\label{3.12}
||f(x)||_v\le ||f||_{v,1}||x||^D_v\text{ for }v\in M_K.
\end{align}
\item For $x\in\P^N({\overline{\Q}})$ and $f_0,\ldots, f_r\in{\overline{\Q}}[x_0,\ldots, x_N]$ homogeneous of degree $D$, 
\begin{align}\label{3.13}
||f_0(x),\ldots, f_r(x)||\le Dh(x)+h_1(f_0,\ldots, f_r).
\end{align}
\end{itemize}

Now, we recall the following proposition from \cite[Proposition 4.5]{EveFer08}.
\begin{proposition}[see Proposition 4.5 \cite{EveFer08}]\label{3.14} Let $X$ be $n$-dimensional subvariety of $\P^N$ of degree $d$ defined over ${\overline{\Q}}.$ Let $g_0,\ldots, g_R$ be homogeneous polynomials of degree $\Delta$ in $ {\overline{\Q}}[x_0,\ldots, x_N]$ such that
$$X({\overline{\Q}})\cap{g_0=0,\ldots, g_R=0}=\emptyset.$$
Let $Y=\varphi (X)$, where is the morphism on $X$ given by $x\mapsto (g_0(x),\ldots, g_R(x))$. Then
\begin{align*}
h(Y ) \le& \Delta^{n+1}h(X)+(n+1)d\Delta^nh_1(g_0,\ldots ,g_R)\\
&+5(n+1)d\Delta^{n+1}\log(N+\Delta)+3(n+1)d\Delta^n\log(R + 1).
\end{align*}
\end{proposition}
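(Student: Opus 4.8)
\textbf{Proof proposal for Proposition \ref{3.14}.}

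The plan is to follow the strategy of Evertse--Ferretti \cite{EveFer08}: relate the Chow form of $Y=\varphi(X)$ to the Chow form of $X$ via the morphism $\varphi\colon x\mapsto(g_0(x),\ldots,g_R(x))$, and then estimate heights along the way. First I would recall that, since $\varphi$ is a morphism (the hypothesis $X(\overline{\Q})\cap\{g_0=\cdots=g_R=0\}=\emptyset$ guarantees it is everywhere defined on $X$), the image $Y$ is an $n$-dimensional subvariety of $\P^R$, and its degree is $\deg Y=\Delta^n\cdot\deg X/\deg\varphi$ where $\deg\varphi$ divides into things favorably; in any case $\deg Y\le d\Delta^n$, which is the bound one needs for the coefficient counts. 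The key algebraic identity is that a generic hyperplane section $\sum_j u_j y_j=0$ of $Y$ pulls back under $\varphi$ to the hypersurface $\sum_j u_j g_j(x)=0$ of degree $\Delta$ in $\P^N$, so that intersecting $Y$ with $n+1$ generic hyperplanes $H_{\u_0},\ldots,H_{\u_n}$ corresponds to intersecting $X$ with the $n+1$ hypersurfaces $\sum_j u_{ij}g_j(x)=0$. Expressing this intersection condition through a resultant / Chow form computation gives a formula for $F_Y(\u_0,\ldots,\u_n)$ as (up to a constant and a power) a specialization of an iterated resultant of $X$ with these pulled-back hypersurfaces, and ultimately in terms of $F_X$ and the coefficients of the $g_j$.

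Next I would carry out the height bookkeeping on that formula. The building blocks are: (i) the height of the Chow form $F_X$, which is $h(X)$ by definition; (ii) the heights of the coefficients of the substituted polynomials $\sum_j u_{ij}g_j$, governed by $h_1(g_0,\ldots,g_R)$; and (iii) combinatorial error terms coming from the number of monomials and the multilinear algebra (determinant expansions, resultant degrees, Gauss-lemma-type estimates relating $h$ and $h_1$). Here I would invoke the standard arithmetic-resultant estimates — the height of a resultant of forms of degrees $\delta_0,\ldots,\delta_n$ in $n+1$ variables is bounded by a weighted sum $\sum_i(\prod_{j\ne i}\delta_j)h(F_i)$ plus an explicit $\log$ of the number of variables and degrees — applied with the $\delta_i$ all equal to $\Delta$ and one factor being $F_X$. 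Tracking the exponents: the Chow form $F_X$ is homogeneous of degree $d$ in each of $n+1$ blocks, substituting degree-$\Delta$ forms multiplies each such degree by $\Delta$, and composing/iterating over the $n+1$ blocks produces the leading term $\Delta^{n+1}h(X)$; the coefficient heights of the $g_j$ enter once per block, i.e.\ with multiplicity $(n+1)$, weighted by the degree $d$ of $X$ in the remaining blocks and the factor $\Delta^n$ from the other substitutions, giving $(n+1)d\Delta^n h_1(g_0,\ldots,g_R)$; and the remaining $\log(N+\Delta)$ and $\log(R+1)$ terms, with their explicit constants $5(n+1)d\Delta^{n+1}$ and $3(n+1)d\Delta^n$, come from the monomial counts in degree $\Delta$ on $\P^N$ (there are at most $(N+\Delta)^\Delta$-ish of them) and the $(R+1)$ blocks of $\u$-variables respectively, again weighted by the degrees appearing in the Chow form.

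I expect the main obstacle to be the first part: producing a clean, explicit relation between $F_Y$ and $F_X$ under the morphism $\varphi$, and in particular controlling the extraneous factor (the power of some resultant, or the degree of $\varphi$ onto its image) that appears when one pulls a generic linear section back to a degree-$\Delta$ section. One must argue that this extraneous factor is itself a form whose height is dominated by the claimed bound, or that it cancels; the cleanest route is to work with the Cayley/Chow form as a resultant and use the multiplicativity and the known height estimates for resultants of polynomial systems (as developed for instance in Philippon's or Rémond's work on arithmetic Chow forms, which \cite{EveFer08} packages into exactly the inequality quoted here). Once the resultant formula is in hand, the second part is routine: it is a matter of assembling the standard height inequalities for resultants, the comparison $h\le h_1\le h+\log(\#\text{monomials})$, and the product formula, then collecting terms to match the stated constants. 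Since this proposition is quoted verbatim from \cite[Proposition 4.5]{EveFer08}, in the paper itself one would simply cite it; the sketch above is how one would reconstruct the proof from scratch.
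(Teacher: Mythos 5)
The paper does not actually prove this proposition: it is quoted verbatim from Evertse--Ferretti (\cite{EveFer08}, Proposition 4.5) and used as a black box, so your concluding observation that one simply cites it is precisely what the paper does. Your sketch of how the result is established in \cite{EveFer08} --- relating $F_Y$ to $F_X$ by pulling generic hyperplane sections of $Y$ back to the degree-$\Delta$ hypersurfaces $\sum_j u_{ij}g_j=0$ on $X$ and then applying arithmetic resultant/Chow-form height estimates --- is consistent with their method, though it remains an outline whose hardest step (the explicit height-controlled relation between the two Chow forms and the verification of the stated constants) is deferred rather than carried out.
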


We state here an useful lemma due to Evertse and Ferretti.
\begin{lemma}[ see \cite{EveFer08}, Lemma 5.2]\label{3.15}
Let $\theta$ be a real with $0<\theta\le\frac{1}{2}$ and let $q$ be a positive integer. Then there exists a set $W$ of cardinality at most $(e/\theta)^{q-1}$, consisting of tuples $c_1,\ldots,c_q$ of nonnegative reals with $c_1+\cdots+c_q=1$, with the following property: for every set of reals $A_1,\ldots,A_q$ and $\Lambda$ with $A_j\le 0$ for $j=1,\ldots,q$ and $\sum_{j=1}^qA_j\le -\Lambda$, there exists a tuple $(c_1,\ldots,c_q)\in W$ such that
$$A_j\le -c_j(1-\theta)\Lambda\text{ for }j=1,\ldots,q.$$
\end{lemma}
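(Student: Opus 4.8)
The plan is to split the statement into an analytic normalisation and a self‑contained combinatorial fact about covering the probability simplex, the latter being proved by an explicit lattice‑point construction.

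\textbf{Step 1: reduction.} If $\Lambda\le 0$ the conclusion is automatic, since then $-c_j(1-\theta)\Lambda\ge 0\ge A_j$ for any nonnegative $c_j$, so any nonempty $W$ works. Assume henceforth $\Lambda>0$, set $S:=-\sum_{j=1}^q A_j$, so that $S\ge\Lambda>0$, and put $d_j:=-A_j/S$; then $d_j\ge 0$ and $\sum_j d_j=1$. With this substitution the lemma follows from the claim: \emph{there is a finite set $W$ of tuples $(c_1,\dots,c_q)$ of nonnegative reals with $\sum_j c_j=1$ and $\#W\le (e/\theta)^{q-1}$, such that every tuple $(d_1,\dots,d_q)$ of nonnegative reals with $\sum_j d_j=1$ admits $(c_1,\dots,c_q)\in W$ with $(1-\theta)c_j\le d_j$ for all $j$.} Indeed, given such a $c$ one gets $A_j=-d_jS\le -d_j\Lambda\le -(1-\theta)c_j\Lambda$.

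\textbf{Step 2: construction of $W$ and the covering property.} Put $N:=\lceil q(1-\theta)/\theta\rceil$ and
$$W:=\Bigl\{\tfrac1N(x_1,\dots,x_q)\ :\ x_i\in\Z_{\ge 0},\ x_1+\cdots+x_q=N\Bigr\}.$$
Fix a tuple $d$ as above and set $b_i:=Nd_i/(1-\theta)$. Writing $c=x/N$, finding $c\in W$ with $(1-\theta)c_j\le d_j$ for all $j$ amounts to finding $x_i\in\Z_{\ge 0}$ with $x_i\le b_i$ and $\sum_i x_i=N$, and such $x$ exists precisely when $\sum_i\lfloor b_i\rfloor\ge N$. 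Since $\lfloor b_i\rfloor>b_i-1$ and $\sum_i b_i=N/(1-\theta)$, we have $\sum_i\lfloor b_i\rfloor>N/(1-\theta)-q$, while the defining inequality $N\ge q(1-\theta)/\theta$ rearranges to $N/(1-\theta)-q\ge N$; hence $\sum_i\lfloor b_i\rfloor\ge N$ and the desired $c$ exists. (The apparent difficulty—that a tiny $d_i$ would force $c_i$ to within a factor $1/(1-\theta)$ of it, beyond the resolution of any fixed grid—does not arise: one simply takes $c_i=0$ for such $i$, and the resulting total deficit is covered by the slack $N/(1-\theta)-N=N\theta/(1-\theta)\ge q$.)

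\textbf{Step 3: cardinality.} Here $\#W=\binom{N+q-1}{q-1}$, and $N\le q(1-\theta)/\theta+1$ gives $N+q-1\le q/\theta$, so $\#W\le\binom{\lfloor q/\theta\rfloor}{q-1}\le (q/\theta)^{q-1}/(q-1)!$. For $q\ge 3$, Stirling's bound $(q-1)!\ge\sqrt{2\pi(q-1)}\,\bigl((q-1)/e\bigr)^{q-1}$ together with $\bigl(q/(q-1)\bigr)^{q-1}<e$ gives $\#W\le (e/\theta)^{q-1}\cdot e/\sqrt{2\pi(q-1)}\le (e/\theta)^{q-1}$; the cases $q=1$ (with $W=\{(1)\}$) and $q=2$ (with $\#W=N+1\le 2/\theta<e/\theta$) are immediate. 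This, combined with Step 1, finishes the proof.

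\textbf{Expected main obstacle.} The one delicate point is the calibration of $N$: it must be large enough for the lattice point in Step 2 to exist—forcing $N\gtrsim q(1-\theta)/\theta$—yet small enough that the crude count $\binom{N+q-1}{q-1}$ still obeys the sharp bound $(e/\theta)^{q-1}$. The choice $N=\lceil q(1-\theta)/\theta\rceil$ is essentially forced, and it is exactly because it produces $N+q-1\le q/\theta$ with no waste that the constant comes out as $e$ rather than some larger absolute constant. Everything else is elementary.
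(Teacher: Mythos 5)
Your proof is correct: the reduction to covering the simplex, the lattice-point construction with $N=\lceil q(1-\theta)/\theta\rceil$, the existence criterion $\sum_i\lfloor b_i\rfloor\ge N$, and the Stirling estimate of $\binom{N+q-1}{q-1}$ all check out, including the edge cases $q=1,2$ and $\Lambda\le 0$. The paper itself offers no proof of this lemma (it is quoted verbatim from Evertse--Ferretti, Lemma 5.2), and your argument is essentially the standard simplex-grid covering proof given in that reference, so there is nothing further to reconcile.
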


\vskip0.2cm
\noindent
\textbf{Part B. Proof of Quantitative Subspace Theorem.} 

\vskip0.1cm
\textbf{(a)} let $K$ be a number field, $S$ be a finite set of places of $K$ and $X,N,n,d,s,C,f^{(v)}_i$ $(v\in S,i=0,\ldots,n),C,\Delta,A_1,A_2,A_3,H$ be as in Theorem \ref{1.5}. We denote the coordinates on $\P^N$ by ${\bf x}=(x_0,\ldots,x_N).$

Let $f_0,\ldots,f_R$ be the distinct polynomials among $\sigma(f_{j}^{(v)})\ (v\in S,j=0,\ldots,n,\sigma\in G_K)$. From (\ref{1.6}), we have
\begin{align}\label{3.16}
R\le C(n+1)s-1.
\end{align}
Denote by $K'$ the extension of $K$ generated by all coefficients of $f_0,\ldots,f_R$. Put $g_i=f_i^{\Delta/\deg f_i}$ for $i=0,\ldots, R$. Then $g_0,\ldots,g_R$  are homogenenous polynomials in $K[x_0,\ldots,x_u]$ of degree $\Delta$. We define
$$\varphi : {\bf x}\mapsto (g_0({\bf x}),\ldots,g_R({\bf x})), Y:=\varphi(X).$$
From assumption (\ref{1.8}), $\varphi$ is a finite morphism on $X$, and $Y$ is a projective subvariety of $\P^R$ defined over $K'$. We have
\begin{align}\label{3.17}
\dim Y=n, \deg Y=: D\le d\Delta^n.
\end{align}
We denote places on $K'$ by $v'$ and define normalized absolute values $|.|_{v'}$ on $K'$ similarly to Section 1. For every $v'\in M_K$, we choose an extension of $|.|_{v'}$ to ${\overline{\Q}}$. Since $K'/K$ is a normal extension, for every $v'\in M_{K'}$, there is $\tau_{v'}\in G_K$ such that
\begin{align}\label{3.18}
|x|_{v'} =|\tau_{v'}(x)|^{1/g(v)}_v\text{ for }x\in{\overline{\Q}}
\end{align}
where $v\in M_K$ is the place below $v'$ and $g(v)$ is the number of places of $K'$ lying above $v$. For each $v'\in M^\infty_{K'}$, there is an isomorphic embedding $\sigma_{v'}:K'\hookrightarrow\C$ such that $|x|_{v'}=|\sigma_{v'}(x)|^{[K_{v'}:\R]/[K':\Q]}$ for $x\in{\overline{\Q}}$. Then, we define norms $||.||_{v'}$,$||.||_{v',1}$ for polynomials similarly as in (\ref{3.11}), with $K',v',\sigma_{v'}$ in place of $K,v,\sigma_v.$

\vskip0.2cm
\noindent
\vskip0.1cm
\textbf{(b)} We now give an above bound for $h_1(1,g_0,\ldots,g_R)$ and $h(Y)$ as follows. Firstly, for $v'\in M^\infty_{K'}$, we have 
\begin{align*}
||1,\sigma_{v'}(g_0),\ldots,\sigma_{v'}(g_R)||_1&=1+\sum_{i=0}^R||\sigma_{v'}(g_i)||_1\le 1+\sum_{i=0}^R||\sigma_{v'}(f_i)||_1^{\Delta/\deg f_i}\\ 
&\le 1+\sum_{i=0}^R\left (\binom{\deg f_i+N}{\deg f_i}||\sigma_{v'}(f_i)||\right)^{\Delta/\deg f_i}\\
&\le(R+2)(N+\Delta)^\Delta ||1,\sigma_{v'}(g_0),\ldots,\sigma_{v'}(g_R)||^\Delta\\
&\le (R+2)(N+\Delta)^\Delta\prod_{i=0}^R||1,\sigma_{v'}(f_i)||^{\Delta}.
\end{align*}
Then for $v'\in M^\infty_{K'}$, we have
$$||1,g_0,\ldots,g_R||_{v',1}\le\left((R+2)(N+\Delta)^\Delta\right)^{[K'_{v'}:\R]/[K':\Q]}\cdot\prod_{i=0}^R||1,f_i||^{\Delta}_{v'}.$$
On the other hand, for $v'\in M^0_{K'}$, we have
$$||1,g_0,\ldots,g_R||_{v',1}\le \prod_{i=0}^R||1,f_i||^{\Delta}_{v'}.$$
Taking the product over $v'\in M_{K'}$, using (\ref{3.16}) and noting that polynomials with conjugate sets of coefficients have the same height, we get
\begin{align*}
h_1(1,g_0,\ldots,g_R)&\le\Delta\left(\sum_{i=0}^Rh(1,f_i)\right)+\Delta\log\left((R+2)(N+\Delta)^\Delta\right)\\ 
&\le\Delta C\left(\sum_{v\in S}\sum_{j=0}^nh(1,f^{(v)}_j)\right)+\Delta\log (N+\Delta)+\log (3Cns).
\end{align*}
Now, inserting this estimate into Proposition \ref{3.14}, we get
\begin{align*}
h(Y)\le&\Delta^{n+1}h(X)+(n+1)d\Delta^{n+1}C\sum_{v\in S}\sum_{j=0}^nh(1,f^{(v)}_j)\\
&6(n+1)d\Delta^{n+1}\log(N+\Delta)+4(n+1)d\Delta^n\log(3Cns).
\end{align*}
By an easy computation, we obtain
\begin{align}\label{3.19}
&h_1(1,g_0,\ldots,g_R)\le 6\Delta^2 Cns\cdot H,\\ 
\label{3.20}
& h(Y)\le 25n^2d\Delta^{n+2}Cs.H,
\end{align}
where $H$ is defined by (\ref{1.7}).

\vskip0.1cm
\textbf{(c)}
Fix a solution ${\bf x}\in X({\overline{\Q}})$ of (\ref{1.9}). For $v\in S$, denote by $I_v$ the subset of $\{0,\ldots,R\}$ such that $\{f^{(v)}_j : j=0,\ldots,n\}=\{f_j : i\in I_v\}$. Put $G_v:=||1,g_0,\ldots,g_R||_{v,1}$ for $v\in S.$ Then
$$\sum_{v\in S}\sum_{i\in I_v}\log\left (\max\limits_{\sigma\in G_K}\frac{|g_i({\bf x})|_v}{G_v||\sigma({\bf x})||^{\Delta}_v}\right )\le -((m-n+1)(n+1)+\delta)\Delta h({\bf x}).$$
By (\ref{3.12}), the all terms in the sum are $\le 0$. We apply Lemma \ref{4.1} with $q=(m+1)s$ and $\theta=\frac{\delta}{2((m-n+1)(n+1)+\delta)}=1-\frac{(m-n+1)(n+1)+\delta/2}{(m-n+1)(n+1)+\delta}$. This yieds that there is a set $W$ with
\begin{align}\label{3.21}
\begin{split}
\sharp W\le\left (\frac{2e((m-n+1)(n+1)+\delta)}{\delta}\right)^{(m+1)s-1}&\le (6e(m+n-1)n\delta^{-1})^{(m+1)s-1}\\
&\le (17(m+n-1)n\delta^{-1})^{(m+1)s-1}.
\end{split}
\end{align}
consisting of tuples of nonnegative reals $(c_{iv}\ :\ v\in S, i\in I_v)$ with
\begin{align}\label{3.22}
\sum_{v\in S}\sum_{i\in I_v}c_{iv}=1.
\end{align}
such that for every solution ${\bf x}\in X({\overline{\Q}})$ of (\ref{1.9}) there is a tuple $(c_{iv}\ :\ v\in S, i\in I_v)\in W$ with
\begin{align}\label{3.23}
\log\left(\max\limits_{\sigma\in G_K}\frac{|g_i(\sigma({\bf x}))|_v}{G_v\cdot ||\sigma({\bf x})||^{\Delta}_v}\right)\le -c_{iv}\left ((m-n+1)(n+1)+\frac{\delta}{2}\right)\Delta h({\bf x})
\end{align}
for all $v\in S, i\in I_v.$ Denote by $S'$ the set of places of $K'$ lying above the places in $S$. We may consider each element of $G_K$ as a permutation on $g_0,\ldots,g_R$. Let $v'\in S'$, $v$ be the place of $K$ lying below $v'$ and $\tau_{v'}\in G_K$ be given by (\ref{3.18}). Then we define the subset $I_{v'}\subset\{0,\ldots,R\}$ and $c_{i,v'}\ (i\in I_{v'})$ by
\begin{align}\label{3.24}
&\{g_i\ :\ i\in I_{v'}\}=\{\tau^{-1}_{v'}(g_j)\ :\ j\in I_v\}\text{ for }v'\in S',\\ 
\label{3.25}
&c_{i,v'}:=\frac{c_{jv}}{g(v)} \text{ for }v'\in S', i\in I_{v'},
\end{align}
where $j\in I_v$ is the index such that $g_i=\tau^{-1}_{v'}(g_j)$ and $g(v)$ is the number of places of $K'$ lying above $v$. Further, we put
$$ G_{v'}:=||1,g_0,\ldots,g_R||_{v',1}\text{ for }v'\in M_{K'}. $$
By (\ref{3.18}), we may rewrite (\ref{3.23}) as
\begin{align}\label{3.26}
\log\left (\max_{\sigma\in G_K}\frac{|g_i(\sigma({\bf x}))|_{v'}}{G_{v'}\cdot ||\sigma({\bf x})||^{\Delta}_{v'}}\right )\le -c_{i,v'}\left ( (m-n+1)(n+1)+\frac{\delta}{2}\right)\Delta h({\bf x})
\end{align}
for all $v'\in S',i\in I_{v'}.$ Combining (\ref{3.21}), (\ref{3.22}) we obtain a lemma analogous to Lemma 5.3 in \cite{EveFer08} as follows.
\begin{lemma}\label{3.27}
There is a set $W'$ of cardinality at most $(17(m+n-1)n\delta^{-1})^{(m+1)s-1}$, consisting of tuples of nonnegative reals $(c_{i,v'}\ :\ v'\in S', i\in I_{v'})$ with
\begin{align}\label{3.28}
\sum_{v\in S'}\sum_{i\in I_{v'}}c_{i,v'}=1,
\end{align}
with the property that for every ${\bf x}\in X({\overline{\Q}})$ with (\ref{1.9}) there is a tuple in $W'$ such that ${\bf x}$ satisfies (\ref{3.26}).
\end{lemma}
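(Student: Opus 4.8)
The plan is to obtain $W'$ as the image of the set $W$ over $K$ --- the one produced by the combinatorial Lemma \ref{3.15} (applied with $q=(m+1)s$ and the $\theta$ chosen there) and whose properties are recorded in (\ref{3.21}), (\ref{3.22}) and (\ref{3.23}) --- under the base-change prescription (\ref{3.24})--(\ref{3.25}). Since the places $v'\in S'$, the automorphisms $\tau_{v'}\in G_K$ from (\ref{3.18}), and the integers $g(v)$ are all fixed beforehand, each tuple ${\bf c}=(c_{iv}:v\in S,\ i\in I_v)\in W$ determines exactly one tuple ${\bf c}'=(c_{i,v'}:v'\in S',\ i\in I_{v'})$, namely $c_{i,v'}=c_{jv}/g(v)$, where $j\in I_v$ is the index with $g_i=\tau_{v'}^{-1}(g_j)$. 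I let $W'$ be the collection of all these ${\bf c}'$; because ${\bf c}\mapsto{\bf c}'$ is a single-valued map of $W$ onto $W'$, the bound $\sharp W'\le\sharp W\le (17(m+n-1)n\delta^{-1})^{(m+1)s-1}$ follows at once from (\ref{3.21}).

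Next I would check the normalization (\ref{3.28}). Fix $v\in S$. For each $v'\mid v$ in $S'$, the rule $i\mapsto j$ with $g_i=\tau_{v'}^{-1}(g_j)$ is a bijection of $I_{v'}$ onto $I_v$, since $\tau_{v'}^{-1}\in G_K$ permutes $g_0,\ldots,g_R$ and carries $\{g_j:j\in I_v\}$ onto $\{g_i:i\in I_{v'}\}$ by (\ref{3.24}); hence $\sum_{i\in I_{v'}}c_{i,v'}=g(v)^{-1}\sum_{j\in I_v}c_{jv}$. Summing over the $g(v)$ places above $v$ and then over $v\in S$, and invoking (\ref{3.22}), yields $\sum_{v'\in S'}\sum_{i\in I_{v'}}c_{i,v'}=\sum_{v\in S}\sum_{j\in I_v}c_{jv}=1$.

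It remains to verify the stated property. Given ${\bf x}\in X({\overline{\Q}})$ satisfying (\ref{1.9}), pick ${\bf c}\in W$ for which (\ref{3.23}) holds for all $v\in S$ and $i\in I_v$, and let ${\bf c}'$ be its lift. Fix $v'\in S'$ lying above $v$, and $i\in I_{v'}$ with associated index $j\in I_v$. Applying (\ref{3.18}) to $g_i(\sigma({\bf x}))$, to the coordinates of $\sigma({\bf x})$, and to the coefficients that enter $G_{v'}$, and using that $\tau_{v'}$ commutes with evaluation and permutes $g_0,\ldots,g_R$, one obtains
$$\frac{|g_i(\sigma({\bf x}))|_{v'}}{G_{v'}\cdot||\sigma({\bf x})||_{v'}^{\Delta}}=\left(\frac{|g_j((\tau_{v'}\sigma)({\bf x}))|_{v}}{G_{v}\cdot||(\tau_{v'}\sigma)({\bf x})||_{v}^{\Delta}}\right)^{1/g(v)}\quad(\sigma\in G_K).$$
Since $\sigma\mapsto\tau_{v'}\sigma$ is a bijection of $G_K$ and $t\mapsto t^{1/g(v)}$ is increasing, taking $\max_{\sigma\in G_K}$ and then $\log$ transforms (\ref{3.23}) at $(v,j)$ into precisely (\ref{3.26}) at $(v',i)$, the factor $g(v)^{-1}$ being absorbed into $c_{i,v'}=c_{jv}/g(v)$.

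The only genuinely delicate point I foresee is the displayed identity in the last step --- above all the claim $G_{v'}=G_{v}^{1/g(v)}$, which at the archimedean places requires matching the embedding $\sigma_{v'}$ with $\sigma_{v}\circ\tau_{v'}$ and reconciling the degree factors $[K'_{v'}:\R]/[K':\Q]$ and $[K_v:\R]/[K:\Q]$ with $g(v)$ --- together with the observation that the numerator, the power $||\sigma({\bf x})||^{\Delta}$, and the normalizer $G$ all rescale by the same exponent $1/g(v)$, so that their quotient is insensitive to the twist $\sigma\mapsto\tau_{v'}\sigma$. Granting these norm computations (the same ones underlying the passage from (\ref{3.23}) to (\ref{3.26})), the lemma is just a repackaging of Lemma \ref{3.15} over the field $K'$.
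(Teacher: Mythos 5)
Your proposal is correct and follows the paper's own route: the paper's ``proof'' of Lemma \ref{3.27} is precisely the discussion in part (c), which produces $W$ via Lemma \ref{3.15} with $q=(m+1)s$ and then passes from (\ref{3.23}) to (\ref{3.26}) via the base-change (\ref{3.24})--(\ref{3.25}), and you reproduce this (with somewhat more detail on the normalization and on the delicate identity $G_{v'}=G_v^{1/g(v)}$ at archimedean places, which the paper handles implicitly in (\ref{3.18})).
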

We consider the solutions of a fixed system (\ref{3.26}). Put
\begin{align}
\label{3.29}
&\text{$\bullet$ $c_{i,v'}=0$ for $v'\in S',i\in\{0,\ldots,R\}\setminus I_{v'}$ and $v'\in M_{K'}\setminus S', i=0,\ldots,R$.}\\
\nonumber
&\text{$\bullet$ $c_{v'}:=(c_{0,v'},\ldots,c_{R,v'})$ for $v'\in M_{K'}$ and ${\bf c}:=({\bf c}_{v'}:\ v'\in M_{K'})$.}
\end{align}
\noindent
Denote by ${\bf y}=(y_0,\ldots,y_R)$ the coordinates of $\P^R$. We define $H_{Q,{\bf c}}({\bf y}), E_Y({\bf c})$ similarly as (\ref{3.3}), (\ref{3.7}), respectively, but with $K'$ in place of $K$.

Using the below bounded estimate for the Chow weight (Theorem \ref{1.4}), we now prove the following lemma which is analogous to Lemma 5.4 in \cite{EveFer08} of Evertse and Ferreti. 
\begin{lemma}\label{3.30}
Let ${\bf x}\in X({\overline{\Q}})$ be a solution of (\ref{3.26}) satisfying (\ref{1.10}) and let $\sigma\in G_K$. Put
$${\bf y}:=\varphi(\sigma({\bf x})), Q:=\mathrm{exp}\left(((m-n+1)(n+1)+\frac{\delta}{2})\Delta h({\bf x})\right).$$
Then
\begin{align}\label{3.31}
H_{Q,{\bf c}}({\bf y})\le Q^{E_Y({\bf c})-\delta/2(n+2)^2}.
\end{align}
\end{lemma}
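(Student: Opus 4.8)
\textbf{Proof proposal for Lemma \ref{3.30}.}

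The plan is to bound $H_{Q,{\bf c}}({\bf y})$ place by place over $M_{K'}$ and then invoke Theorem \ref{2.12new} (the Evertse--Ferretti lower bound for $S_Y$ in terms of the Chow weight $e_Y$) together with Theorem \ref{3.9}-style estimates, exactly as in the proof of Lemma 5.4 of \cite{EveFer08}, but feeding in the sharper Chow weight bound of Theorem \ref{1.4}. First I would unwind the definition $H_{Q,{\bf c}}({\bf y})=\prod_{w\in M_{L}}\max_{0\le i\le R}\bigl(|y_i|_w Q^{c_{iw}}\bigr)$ for a field $L\supset K'$ containing the coordinates of ${\bf y}=\varphi(\sigma({\bf x}))$. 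The places $w\notin S'$ contribute nothing beyond $\prod_w\max_i|y_i|_w$ since $c_{iw}=0$ there, so those places just rebuild $e^{h({\bf y})}\le e^{\Delta h({\bf x})+h_1(1,g_0,\dots,g_R)}$ via \eqref{3.13}. For the places $w$ above some $v'\in S'$, I use the system \eqref{3.26}: rewriting $|y_i|_w=|g_i(\sigma({\bf x}))|_w$ and $Q^{c_{iw}}$ in terms of $Q=\exp\bigl(((m-n+1)(n+1)+\tfrac{\delta}{2})\Delta h({\bf x})\bigr)$, inequality \eqref{3.26} says precisely that $|g_i(\sigma({\bf x}))|_{v'}\le G_{v'}\,\|\sigma({\bf x})\|^{\Delta}_{v'}\,Q^{-c_{i,v'}}$ for $i\in I_{v'}$, while for $i\notin I_{v'}$ one only has the trivial bound $|g_i(\sigma({\bf x}))|_{v'}\le G_{v'}\|\sigma({\bf x})\|^{\Delta}_{v'}$ from \eqref{3.12} and $c_{i,v'}=0$. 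Multiplying through, the contribution of the places above $v'$ is at most $G_{v'}\cdot\|\sigma({\bf x})\|^{\Delta}_{v'}\cdot Q^{-\min_{i\in I_{v'}}c_{i,v'}}$, and one checks against the definition of the twisted height that collecting over all $v'\in S'$ gives roughly $H_{Q,{\bf c}}({\bf y})\le e^{h_1(1,g_0,\dots,g_R)}\cdot e^{\Delta h({\bf x})}\cdot Q^{-\sum_{v'}\min_{i}c_{i,v'}}$ — i.e. the standard reduction showing $\log H_{Q,{\bf c}}({\bf y})\le -(\text{Chow-weight term})+(\text{error})$.

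Next I would bring in the arithmetic half. By Theorem \ref{2.12new} applied to $Y$ with a suitable auxiliary integer $u$ (chosen as in \cite{EveFer08}, of size a polynomial in $n,D,\delta^{-1}$) one gets, for each $v'$, an inequality relating $\tfrac{1}{uH_Y(u)}S_Y(u,{\bf c}_{v'})$ to $\tfrac{1}{(N_R+1)D}e_Y({\bf c}_{v'})$ with an $O(D^2/u)\max_i c_{i,v'}$ error, where $N_R=R$; summing over $v'\in M_{K'}$ and using \eqref{3.28} (so $\sum_{v'}\max_i c_{i,v'}\le 1$) turns the right-hand side into $E_Y({\bf c})-\tfrac{(2R+1)D}{u}$, which is $\ge E_Y({\bf c})-\delta/4(n+2)^2$ once $u$ is large enough. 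The key input now is Theorem \ref{1.4}: since $\varphi$ is finite and \eqref{1.8} holds, the index set $\{i_0,\dots,i_m\}$ can be taken so that $Y({\overline{\Q}})\cap\{y_{i_0}=\dots=y_{i_m}=0\}=\emptyset$, hence $e_Y({\bf c}_{v'})\ge\frac{D}{m-n+1}\sum_{i}c_{i,v'}$, and because $\{g_i:i\in I_{v'}\}$ is (a $G_K$-conjugate of) $\{f^{(v)}_0,\dots,f^{(v)}_n\}$ one has $\sum_{i\in I_{v'}}c_{i,v'}$ equal to the mass that \eqref{3.26} places on that block. This is the step that makes the exponent come out as $(m-n+1)(n+1)+\delta$ rather than $(m+1)(n+1)+\delta$: dividing the Chow weight bound by $(m-n+1)$ and pairing it with the normalization \eqref{3.28} produces exactly the factor $(n+1)/(m-n+1)\cdot(m-n+1)=(n+1)$ against the $Q$-exponent $(m-n+1)(n+1)+\delta/2$.

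Assembling the two halves: $\log H_{Q,{\bf c}}({\bf y})\le \Delta h({\bf x})+h_1(1,g_0,\dots,g_R)-\bigl(E_Y({\bf c})-\delta/4(n+2)^2\bigr)\log Q$, and since $\log Q=((m-n+1)(n+1)+\tfrac{\delta}{2})\Delta h({\bf x})$, the dominant term $-((m-n+1)(n+1)+\tfrac\delta2)\Delta h({\bf x})E_Y({\bf c})$ absorbs $\Delta h({\bf x})$ with room to spare provided $h({\bf x})$ is large — which is guaranteed by the hypothesis $h({\bf x})\ge A_3 H$ in \eqref{1.10} together with the bounds \eqref{3.19}, \eqref{3.20} on $h_1(1,g_0,\dots,g_R)$ and $h(Y)$. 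A short computation then reduces everything to $H_{Q,{\bf c}}({\bf y})\le Q^{E_Y({\bf c})-\delta/2(n+2)^2}$, which is \eqref{3.31}. The main obstacle I expect is purely bookkeeping: correctly tracking the normalized absolute values through the extension $K'/K$ (the factors $d(w|v)$, $g(v)$ and $\tau_{v'}$ from \eqref{3.1}, \eqref{3.18}, \eqref{3.24}, \eqref{3.25}) so that the per-block masses $\sum_{i\in I_{v'}}c_{i,v'}$ match up with the $c_{iv}$ of Lemma \ref{3.27}, and choosing the auxiliary $u$ and checking that all the accumulated error terms really fit inside $\delta/2(n+2)^2$ once $h({\bf x})\ge A_3H$; the genuinely new point, the improved exponent, is handed to us directly by Theorem \ref{1.4}.
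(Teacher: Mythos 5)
Your first paragraph is on the right track and matches the paper's strategy: bound $H_{Q,\c}(\y)$ place by place by passing to a finite normal extension $L\supset K'$, using \eqref{3.26} for $i\in I_{v'}$, $v'\in S'$, and \eqref{3.12} (with $c_{i,v'}=0$) elsewhere. One small error: the per-place bound is simply $\max_i|y_i|_w Q^{c_{iw}}\le\bigl(G_{v'}\|\sigma(\x)\|_{v'}^\Delta\bigr)^{d(w|v')}$; there is no residual $Q^{-\min_{i\in I_{v'}}c_{i,v'}}$ factor, because the $Q^{c_{iw}}$ that appears in the twisted height is exactly what gets absorbed when you invoke \eqref{3.26}. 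Taking the product over $w\in M_L$ then gives the clean estimate
\begin{align*}
H_{Q,\c}(\y)\le \exp\bigl(h_1(1,g_0,\ldots,g_R)\bigr)\cdot\exp\bigl(\Delta h(\x)\bigr)=\exp\bigl(h_1(1,g_0,\ldots,g_R)\bigr)\cdot Q^{1/((m-n+1)(n+1)+\delta/2)},
\end{align*}
with no $E_Y$-term on the right — your ``assembled'' inequality $\log H_{Q,\c}(\y)\le\Delta h(\x)+h_1(\cdots)-(E_Y(\c)-\cdots)\log Q$ conflates the upper bound on the height with the thing to be proved.

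The more significant structural problem is your second paragraph. You invoke Theorem \ref{2.12new} (the $S_Y$-versus-$e_Y$ inequality) and an auxiliary integer $u$, as if you needed to pass through the Hilbert weight to reach $E_Y(\c)$. That is not part of this lemma. By definition \eqref{3.7}, $E_Y(\c)=\frac{1}{(n+1)D}\sum_{v'}e_Y(\c_{v'})$ is already expressed in terms of the Chow weight, so Theorem \ref{1.4} bounds it directly: for $v'\in S'$ (with $I_{v'}=\{i_0,\dots,i_m\}$ and $Y(\overline{\Q})\cap\{y_{i_0}=\dots=y_{i_m}=0\}=\emptyset$ as you correctly note), Theorem \ref{1.4} gives $e_Y(\c_{v'})\ge\frac{D}{m-n+1}\sum_{i\in I_{v'}}c_{i,v'}$, and summing over $v'$ and using \eqref{3.28} yields $E_Y(\c)\ge\frac{1}{(m-n+1)(n+1)}$. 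No $S_Y$, no $u$, no $O(D^2/u)$ error. The Hilbert-weight apparatus in Theorem \ref{2.12new} is entirely internal to the black-box Theorem \ref{3.9}, which is applied \emph{after} Lemma \ref{3.30} in part (d); importing it here adds both noise and an error term that has nothing to absorb it. With the correct direct bound $E_Y(\c)\ge\frac{1}{(m-n+1)(n+1)}$, the last step is the one arithmetic inequality
\begin{align*}
h_1(1,g_0,\ldots,g_R)\le\Bigl(E_Y(\c)-\tfrac{\delta}{2(n+2)^2}-\tfrac{1}{(m-n+1)(n+1)+\delta/2}\Bigr)\log Q,
\end{align*}
which follows from \eqref{3.19} and the hypothesis $h(\x)\ge A_3H$. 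So: right key idea (Theorem \ref{1.4} plus per-place bounding via \eqref{3.26}), but drop the spurious Theorem \ref{2.12new}/$S_Y$/$u$ detour and fix the form of the per-place estimate.
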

\begin{proof}
We will give a below bound for $E_Y(\c)$ as follows. 

For a place $v'\in S'$, we write $I_{v'}=\{i_0,...,i_m\}$. Since $X$ defined over $\overline{\Q}$ and $g_{i_0},...,g_{g_{i_m}}$ are conjugate over $K$ to power of $f_0^{(v)},....,f_m^{(v)}$ with a place $v\in S$ lying below $v'$, the assumption (\ref{1.8}) implies that 
$$ X(\overline{\Q})\cap\{g_{i_0}=0,...,g_{i_m}=0\}=\emptyset. $$
Since $Y=\varphi(X)$, for every $\y =(y_0,...,y_R)\in Y(\overline{\Q})$, there is $\x=(x_0,...,x_N)\in X(\overline{\Q})$ such that $y_i=g_i(\x)\ (0\le i\le R)$. It yields that
$$ Y(\overline{\Q})\cap\{y_{i_0},....,y_{i_m}\}=\emptyset. $$
By Theorem \ref{1.4}, we have
\begin{align}\label{3.32}
\begin{split}
\frac{1}{(n+1)D}e_Y(\c_{v'})&\ge\dfrac{1}{(m-n+1)(n+1)}(c_{i_0,v'}+\cdots+c_{i_m,v'})\\
&=\frac{1}{(m-n+1)(n+1)}\sum_{i\in I_{v'}}c_{i,v'}.
\end{split}
\end{align}
Note that for $v'\not\in S'$, we have $c_{i,v'}=0\ (0\le i\le R)$ and hence $e_{Y}(\c_{v'})=0$.

Therefore, summing over all $v'\in S'$ the both sides of (\ref{3.32}) we obtain
\begin{align}\label{3.33}
E_Y(\c)\ge\frac{1}{(m-n+1)(n+1)}. 
\end{align}

Now, let $\x$ be a solution of (\ref{3.26}) with (\ref{1.10}) and let $\sigma\in G_K$. By setting (\ref{3.26}) we see that $\sigma(\x)$ satisfies (\ref{3.26}) for all $v\in M_K$ and $i=0,...,R$. Put $y_i=g_i(\sigma(\x))\ (i=0,...,R)$ and $\y=(y_0,...,y_R)=\varphi(\sigma(\x))$. Let $L$ be a finite normal extension of $K'$ such that $\sigma(\x)\in X(L)$. Let $\omega\in M_L$. We take $v'$ to be the place of $K'$ lying below $\omega$. Then there exists $\tau\in G_{K'}$ such that $|x|_\omega=|\tau_{\omega}(x)|_{v'}^{d(\omega|v')}$ for $x\in L$, where $d(\omega|v')=[L_{\omega}:K'_{v'}]/[L:K']$. Putting $c_{i\omega}=d(\omega|v')c_{i,v'}$, we have
\begin{align*}
|y_i|_\omega Q^{c_{i\omega}}&=|g_i(\sigma(\x))|_\omega Q^{c_{i\omega}}=\left(|g_i(\tau_\omega(\sigma(\x)))|_{v'}Q^{c_{i,v'}}\right )^{d(\omega|v')}\\
&\le \left(G_{v'}||\tau_\omega(\sigma(\x))||_{v'}^{\Delta}\right)^{d(\omega|v')}= G_{v'}^{d(\omega|v')}||\sigma(\x)||_{v'}^{\Delta}.
\end{align*}
Thus 
$$ \max_{0\le i\le R}|y_i|_\omega Q^{c_{i\omega}}\le G_{v'}^{d(\omega|v')}||\sigma(\x)||_{v'}^{\Delta}.$$
By taking the product both sides of the above inequality over all $\omega\in M_L$ and using $h(\sigma(\x))=h(\x)$, we obtain
\begin{align}\label{3.34}
H_{Q,\c}(\y)\le \exp(h_1(1,g_0,...,g_R))Q^{1/((m-n+1)(n+1)+\delta/2)}.
\end{align}

From here and throughout the remaining proof of this section, we put $\alpha=(m+n-1)(n+1)$. By the definition of $Q$ and using the estimate (\ref{3.33}), we have
\begin{align*}
&\left (E_Y(\c)-\frac{\delta}{2(\alpha +1)^2}-\frac{1}{\alpha+\delta/2}\right)\log Q\\
&\ge\left( \frac{1}{\alpha}-\frac{\delta}{2(\alpha +1)^2}-\frac{1}{n+1+\delta/2}\right)\left(\alpha+\frac{\delta}2\right)\Delta h(\x)\\
&=\frac{\delta\left(2\alpha-\alpha\delta/2+1\right)}{2\alpha(\alpha+1)^2}\Delta h(\x)\ge\frac{3\delta}{4(\alpha+1)^2}\Delta h(\x)\\
& \ge\frac{3\delta\Delta}{4((m+n-1)(n+1)+1)^2}A_3H\ge 6\Delta^2Cns\ge h_1(1,g_0,...,g_R),
\end{align*}
where the last inequality follows from (\ref{3.19}).  
Combining the above inequality and (\ref{3.34}) we obtain
$$ H_{Q,\c}(\y)\le Q^{E_Y(\c)-\frac{\delta}{2(\alpha +1)^2}}. $$
The lemma is proved.
\end{proof}

\vskip0.1cm
\textbf{(d) Proof of Theorem \ref{1.5}.}
We apply Theorem \ref{3.9} with $K',\delta/(2(\alpha+1))^2$ in place of $K,\delta$ and, in view of (\ref{3.16}) and (\ref{3.17}), with $D\le d\Delta^n$ and $R=C(m+1)s-1.$ From (\ref{3.28}), (\ref{3.31}), we see that the conditions (\ref{3.4}), (\ref{3.5}), (\ref{3.6}) (with $K'$ in place of $K$) are satisfied. Denote by $B'_1,B'_2,B'_3$ the quantities obtained by substituting $\delta/(2(\alpha+1))^2$ for $\delta, C(m+1)s-1$ for $R$, and $d\Delta^n$ for $D$ in the quantities $B_1,B_2,B_3$, respectively, defined by (\ref{3.8}). Recall that if ${\bf x}$ satisfies (\ref{1.10}), then Lemma \ref{3.30} is applicable. Moreover,
\begin{align*}
\log Q&=\left (\alpha+\frac{\delta}{2}\right )\Delta h({\bf x})\ge A_3H\\
&=\mathrm{exp}\left (2^{6n+20}n^{2n+3}\delta^{-n-l}d^{n+2}\Delta^{n(n+2)} \log(2Cs)\right )\cdot H\\
&\ge\mathrm{exp}\left (2^{5n+4}(2(n + 2)^2\delta^{-1})^{n+l}(d\Delta^n)^{n+2}\log(4C(n + 1)s)\right )\cdot\left (26n^2d\Delta^{n+2}Cs\right )\cdot H\\
&=B'_3\cdot\left(26n^2d\Delta^{n+2}Cs\right )\cdot H\ge B'_3(h(Y)+1),
\end{align*}
where the last inequality follows from (\ref{3.20}). Hence we may apply Theorem \ref{3.9}.

Hence, Theorem \ref{3.9} and Lemma \ref{3.30} imply that there are homogeneous polynomials $F_1,\ldots, F_t\in K'[y_0,\ldots,y_R]$ not vanishing identically on $Y$, with $t\le B_1'$ and $\deg F_i\le  B'_2$ for $i=1,\ldots,t$, with the property that: for every solution ${\bf x}\in X({\overline{\Q}})$ of (\ref{3.26}) with (\ref{1.10}), there is $F_i\in\{F_l,\ldots,F_t\}$ such that $F_i(\varphi(\sigma({\bf x})))=0$ for every $\sigma\in G_K$. In fact, taking $Q=\mathrm{exp}((\alpha +\delta/2)\Delta h({\bf x}))$, from Theorem \ref{3.9}, there is $F_i$ such that $F_i({\bf y})=0$ for every ${\bf y}\in Y({\overline{\Q}})$ with $H_{Q,{\bf c}}({\bf y})\le Q^{E_Y({\bf c})-\delta/2(\alpha+1)^2}$, and then by Lemma \ref{3.30} this holds in particular for all points ${\bf y}=\varphi(\sigma({\bf x}))$.)

Therefore $\tilde F_i(\sigma({\bf x}))=0$ for $\sigma\in G_K$, where $\tilde F_i$ is the polynomial obtained by substituting $g_j$ for $y_j$ in $F_i$ for $j=0,\ldots,R$. We note that $\tilde F_i\in K'[x_0,\ldots,X_N],\deg\tilde F_i\le B'_2\Delta$ and $\tilde F_i$ does not vanish identically on $X$. Write $\tilde F_i=\sum_{k=1}^M\omega_k\tilde F_{ik}$, where $\omega_1,\ldots,\omega_M$ is a $K$-basis of $K'$, and the $\tilde F_{ik}$ are polynomials with coefficients in $K$. We may choose $G_i\in\{\tilde F_{ik}: k=1,\ldots,M\}$ not vanishing identically on $X$. Since $\sigma (\tilde F_i)({\bf x})=0$ for $\sigma\in G_K$ and the polynomials $\tilde F_{ik}$ are linear combinations of the polynomials $\sigma(\tilde F_i)\ (\sigma\in G_K)$, it follows that $\tilde F_{ik}({\bf x})=0$ for $k=1,\ldots,M$, so in particular $G_i({\bf x})=0$.

Then, there are homogeneous polynomials $G_1,\ldots,G_t\in K[x_0,\ldots,x_N]$ with $t\le B'_1$ and $\deg G_i\le B'_2\Delta$ for $i=1,\ldots,t$ not vanishing identically on $X$, such that the set of ${\bf x}\in X({\overline{\Q}})$ with (\ref{3.26}) and with (\ref{1.10}) is contained in $\bigcup_{i=1}^t(X\cap\{G_i=0\})$.

By Lemma \ref{3.27}, there are at most $T:=(17(m-n+1)n\delta^{-1})^{(m+1)s-1}$ different systems (\ref{3.26}), such that every solution ${\bf x}\in X({\overline{\Q}})$ of (\ref{1.9}) satisfies one of these systems. Therefore, there are homogeneous polynomials $G_1,\ldots,G_u\in K[x_0,\ldots,x_N]$ not vanishing identically on $X$, with $u\le B'_1T$ and with $\deg G_i\le B'_2\Delta$ for $i=1,\ldots,u,$ such that the set of ${\bf x}\in X({\overline{\Q}})$ with (\ref{1.9}), (\ref{1.10}) is contained in $\bigcup_{i=1}^u(X\cap\{G_i=0\}).$ 

In order to complete the proof of Theorem \ref{1.5}, we remain show that $B'_2\Delta=A_2$ and $B'_1T\le A_1$. Indeed, we have:
\begin{align}\nonumber
\bullet\ \  & B'_2\Delta=(4n+3)(d\Delta^n)(2(\alpha +1)^2\delta^{-1})\Delta\\
\nonumber
&\hspace{20pt}=(8n+6)((m+n-1)(n+1)+1)^2d\Delta^{n+1}\delta^{-1}=A_2,\\
\nonumber
\bullet\ \  &B'_1T\le\mathrm{exp}\left (2^{10n+4}(2((m-n+1)(n+1)+1)^2)^{2n}\delta^{-2n}(d\Delta^n)^{2n+2}\right)\\
\label{3.35}
&\hspace{20pt}\times\log(4(m+1)Cs)\log\log(4(m+1)Cs)\cdot\left(17(m-n+1)n\delta^{-1}\right)^{(m+1)s-1}.
\end{align}
We have some following fundamental estimates:
\begin{align*}
\bullet\ \ &((m-n+1)(n+1)+1)^{4n}\le (m-n+1)^{4n}(n+2)^{4n}\le (m-n+1)^{4n}n^{4n}\left(1+\frac{2}{n}\right )^{4n}\\
&\ \ \ \le (m-n+1)^{4n}n^{4n}\mathrm{exp}(8)\le 2^{12}(m-n+1)^{4n}n^{4n},\\
\bullet\ \ &\log(4(m+1)Cs)\le \sqrt{(m+1)s}\log (4C) (\text{ since $4C\ge 4$}),\\
\bullet\ \ &\log\log(4(m+1)Cs)\le\log(\sqrt{(m+1)s}\log (4C))\le 2\sqrt{(m+1)s}\log\log(4C),\\
\bullet\ \ &2(x+1)\le \frac{18^{x+1}}{17^x} \text{ for all }x>0.
\end{align*}
Then, from (\ref{3.35}) we have
\begin{align*}
B'_1T\le&\mathrm{exp}\left (2^{12n+16}(m-n+1)^{4n}n^{4n}\delta^{-2n}d^{2n+2}D^{n(2n+2)}\right)\\
&\times \left(18(m-n+1)n\delta^{-1}\right)^{(m+1)s-1}\log (4C)\log\log(4C)=A_1.
\end{align*}
The proof of the theorem is completed.
\hfill$\square$

\section{General form of Second main theorem for hypersurfaces}

In this section, we will prove Theorem \ref{1.12}. Firstly, we need to recall some following.

\vskip0.2cm
\noindent
\textbf{4.1. Nevanlinna's functions.} Let $f : \mathbb C^p \to \P^N(\C)$ be a meromorphic mapping. Let $\tilde f = ( f_0,\ldots, f_N)$ be a reduced representation of  $f,$  where $f_0,\ldots, f_N$ are holomorphic functions on $\mathbb C^p$ such that $I(f)=\{f_0=\cdots =f_N\}$ is  an analytic subset of codimension at least two of $\C^p$. The characteristic function of $f$ (with respect to the hyperplane line bundle of $\P^N(\C)$), denoted by $T_f (r)$, is defined by
$$ T_f (r)=\int_1^{r}\frac{dt}{t}\int_{B(t)}f^*\Omega\wedge v_{p-1},$$
where $B(t)=\{z\in\C^p\ ;\ ||z||<t\}$, $v_{p-1}(z) := \big(dd^c ||z||^2\big)^{p-1}$ and $\Omega$ is the Fubini-Study form on $\P^N(\C)$. By Jensen's formula, we have
$$ T_f(r)=\int_{S(r)}\log ||\tilde f||\sigma_p-\int_{S(1)}\log ||\tilde f||\sigma_p,$$
where $||\tilde f||=(|f_0|^2+\cdots |f_n|^2)^{\frac{1}{2}}$, $S(r)=\{z\in\C^p\ ;\ ||z||=r\}$ and $\sigma_p(z):= d^c \log||z||^2 \land \big(dd^c\log||z||^2\big)^{p-1}$.

Fix $(\omega_0:\cdots :\omega_N)$ be a homogeneous coordinate system on $\P^n(\C)$. Let $Q$ be a hypersurface in $\P^n(\C)$ of degree $\Delta$. Throughout this paper, we sometimes identify a hypersurface with the defining polynomial if there is no confusion.  Then we may write
$$ Q(\omega)=\sum_{I\in\mathcal T_\Delta}a_I\omega^I, $$
where $\mathcal T_\Delta=\{(i_0,...,i_N)\in\mathbb Z_{\ge 0}^{N+1}\ ;\ i_0+\cdots +i_N=\Delta\}$, $\omega =(\omega_0,...,\omega_N)$, $\omega^I=\omega_0^{i_0}...\omega_N^{i_N}$ with $I=(i_0,...,i_N)\in\mathcal T_\Delta$ and $a_I\ (I\in\mathcal T_\Delta)$ are constants, not all zeros. Here $\Z_{\ge 0}^{N+1}$ denotes the set consist of all $(N+1)$-tuples of non negative integers.  In the case $\Delta=1$, we call $Q$ a hyperplane of $\P^N(\C)$.

For each hypersurface $Q$ in $\P^N(\C)$ with $f(\C^p)\not\subset Q$, we denote by $f^*Q$ the pull back divisor of $Q$ by $f$, where $Q$ is considered as a divisor in $\P^N(\C)$. The counting function of $f$ with respect to $Q$ is denoted by $N_{Q(f)}(r)$ and defined by
$$ N_{Q(f)}(r)=\int_1^{r}\frac{dt}{t}\int_{B(t)}[f^*Q]\wedge v_{p-1},$$
where $[f^*Q]$ denotes the current generated by the divisor $f^*Q$. 

The proximity function of $f$ with respect to $Q$, denoted by $m_f (r,Q)$, is defined by
$$m_f (r,Q)=\int_{S(r)}\log\frac{||\tilde f||^\Delta}{|Q(\tilde f)|}\sigma_p-\int_{S(1)}\log\frac{||\tilde f||^\Delta}{|Q(\tilde f)|}\sigma_p,$$
where $Q(\tilde f)=Q(f_0,...,f_N)$. This definition is independent of the choice of the reduced representation of $f$. 

The first main theorem in Nevalinna theory states that
$$ \Delta T_f(r)=N_{Q(f)}(r)+m_f(r,Q)+O(1).$$

\vskip0.2cm
\noindent
\textbf{4.2. Proof of Theorem \ref{1.12}.}
By adding more hypersurfaces into the set $\{Q_1,...,Q_q\}$ if necessary, without lose of generality we may suppose that $V\cap\bigcap_{j=1}^qQ_j=\emptyset$ and that all elements $K\in\mathcal K$ has the cardinality of $m+1$ and $V\cap\bigcap_{j\in K}Q_j=\emptyset$. 

We consider the mapping $\Phi$ from $V$ into $\P^{q-1}(\C)$, which maps a point $x\in V$ into the point $\Phi(x)\in\P^{q-1}(\C)$ given by
$$\Phi(x)=(Q_1(x):\cdots : Q_2(x):\cdots:Q_{q}(x)).$$ 
Let $Y=\Phi (V)$. Since $V\cap\left (\bigcap_{j=1}^{q}Q_{j}\right )=\emptyset$, $\Phi$ is a finite morphism on $V$ and $Y$ is a complex projective subvariety of $\P^{q-1}(\C)$ with $\dim Y=k$ and $D:=\deg Y=\le \Delta^k.\deg V$. 
For every 
$${\bf a} = (a_{1},\ldots ,a_{q})\in\mathbb Z^q_{\ge 0}$$ 
and
$${\bf y} = (y_{1},\ldots ,y_{q})$$ 
we denote ${\bf y}^{\bf a} = y_{1}^{a_{1}}\ldots y_{q}^{a_{q}}$. Let $u$ be a positive integer. We set
\begin{align}\label{4.1}
n_u:=H_Y(u)-1,\ l_u:=\binom{q+u-1}{u}-1,
\end{align}
and define the space
$$ Y_u=\C[y_1,\ldots,y_q]_u/(I_Y)_u, $$
which is a vector space of dimension $n_u+1$. We fix a basis $\{v_0,\ldots, v_{n_u}\}$ of $Y_u$ and consider the meromorphic mapping $F$ with a reduced representation
$$ \tilde F=(v_0(\Phi\circ \tilde f),\ldots ,v_{n_u}(\Phi\circ \tilde f)):\C^m\rightarrow \C^{n_u+1}. $$
Hence $F$ is linearly nondegenerate, since $f$ is algebraically nondegenerate.

Now, we fix a point $z\not\in\bigcup_{i=1}^q(Q_j(\tilde f))^{-1}(0)$. We define 
$${\bf c}_z = (c_{1,z},\ldots,\ldots,c_{q,z})\in\mathbb Z^{q},$$ 
where
\begin{align}\label{4.2}
c_{j,z}:=\log\frac{||\tilde f(z)||^\Delta||Q_{j}||}{|Q_{j}(\tilde f)(z)|}\text{ for } j=1,...,q.
\end{align}
We see that $c_{j,z}\ge 0$ for all $j$. By the definition of the Hilbert weight, there are ${\bf a}_{1,z},...,{\bf a}_{H_Y(u),z}\in\mathbb N^{q}$ with
$$ {\bf a}_{i,z}=(a_{i,1,z},\ldots, a_{i,q,z})\text{ with }a_{i,s,z}\in\{1,...,l_u\}, $$
 such that the residue classes modulo $(I_Y)_u$ of ${\bf y}^{{\bf a}_{1,z}},...,{\bf y}^{{\bf a}_{H_Y(u),z}}$ form a basic of $\C[y_1,...,y_p]_u/(I_Y)_u$ and
\begin{align}\label{4.3}
S_Y(u,{\bf c}_z)=\sum_{i=1}^{H_Y(u)}{\bf a}_{i,z}\cdot{\bf c}_z.
\end{align}
We see that ${\bf y}^{{\bf a}_{i,z}}\in Y_m$ (modulo $(I_Y)_m$). Then we may write
$$ {\bf y}^{{\bf a}_{i,z}}=L_{i,z}(v_0,\ldots ,v_{H_Y(u)}), $$ 
where $L_{i,z}\ (1\le i\le H_Y(u))$ are independent linear forms.
We have
\begin{align*}
\log\prod_{i=1}^{H_Y(u)} |L_{i,z}(\tilde F(z))|&=\log\prod_{i=1}^{H_Y(u)}\prod_{1\le j\le q}|Q_{j}(\tilde f(z))|^{a_{i,j,z}}\\
&=-S_Y(m,{\bf c}_z)+\Delta uH_Y(u)\log ||\tilde f(z)|| +O(uH_Y(u)).
\end{align*}
This implies that
\begin{align*}
\log\prod_{i=1}^{H_Y(u)}\dfrac{||\tilde F(z)||\cdot ||L_{i,z}||}{|L_{i,z}(\tilde F(z))|}=&S_Y(u,{\bf c}_z)-\Delta uH_Y(u)\log ||\tilde f(z)|| \\
&+H_Y(u)\log ||\tilde F(z)||+O(uH_Y(u)).
\end{align*}
Here we note that $L_{i,z}$ depends on $i$ and $z$, but the number of these linear forms is finite. We denote by $\mathcal L$ the set of all $L_{i,z}$ occurring in the above inequalities. Then we have
\begin{align}\label{4.4}
\begin{split}
S_Y(u,{\bf c}_z)\le&\max_{\mathcal J\subset\mathcal L}\log\prod_{L\in \mathcal J}\dfrac{||\tilde F(z)||\cdot ||L||}{|L(\tilde F(z))|}+\Delta uH_Y(u)\log ||\tilde f(z)||\\
& -H_Y(u)\log ||\tilde F(z)||+O(uH_Y(u)),
\end{split}
\end{align}
where the maximum is taken over all subsets $\mathcal J\subset\mathcal L$ with $\sharp\mathcal J=H_Y(u)$ and $\{L;L\in\mathcal J\}$ is linearly independent.
From Theorem \ref{2.12new} we have
\begin{align}\label{4.5}
\dfrac{1}{uH_Y(u)}S_Y(u,{\bf c}_z)\ge&\frac{1}{(N+1)D}e_Y({\bf c}_z)-\frac{(2N+1)D}{u}\max_{1\le j\le q}c_{j,z}
\end{align}
It is clear that
\begin{align*}
\max_{1\le j\le q}c_{j,z}\le \sum_{1\le j\le q}\log\frac{||\tilde f(z)||^\Delta||Q_{j}||}{|Q_{j}(\tilde f)(z)|}+O(1),
\end{align*}
where the term $O(1)$ does not depend on $z$.
Combining (\ref{4.4}), (\ref{4.5}) and the above remark, we get
\begin{align}\nonumber
\frac{1}{(N+1)D}e_Y({\bf c}_z)\le &\dfrac{1}{uH_Y(u)}\left (\max_{\mathcal J\subset\mathcal L}\log\prod_{L\in \mathcal J}\dfrac{||\tilde F(z)||\cdot ||L||}{|L(\tilde F(z))|}-H_Y(u)\log ||\tilde F(z)||\right )\\
\label{4.6}
\begin{split}
&+\Delta\log ||\tilde f(z)||+\frac{(2n+1)D}{u}\max_{1\le j\le q}c_{j,z}+O(1/u)\\
\le &\dfrac{1}{uH_Y(u)}\left (\max_{\mathcal J\subset\mathcal L}\prod_{L\in\mathcal J}\dfrac{||\tilde F(z)||\cdot ||L||}{|L(\tilde F(z))|}-H_Y(u)\log ||\tilde F(z)||\right )\\
&+\Delta\log ||\tilde f(z)||+\frac{(2n+1)D}{u}\sum_{1\le j\le q}\log\frac{||\tilde f(z)||^\Delta||Q_{j}||}{|Q_{j}(\tilde f)(z)|}+O(1/u).
\end{split}
\end{align}
For every $1\le j_0<j_1<\cdots <j_m\le q$, we have $V\cap\bigcap_{i=0}^mQ_{j_i}=\emptyset$. Then by Lemma \ref{3.11}, we have
\begin{align*}
e_Y({\bf c}_z)\ge \frac{\Delta}{m-n+1}\cdot(c_{j_0,z}+\cdots +c_{j_m,z}).
\end{align*}
This implies that
\begin{align}\label{4.7}
e_Y({\bf c}_z)\ge \frac{D}{m-n+1}\cdot\max_{K\in\mathcal K}\log\prod_{j\in K}\frac{||\tilde f(z)||^\Delta||Q_j||}{|Q_j(\tilde f)(z)|}.
\end{align}
Then, from (\ref{4.6}) and (\ref{4.7}) we have
\begin{align}\label{4.8}
\begin{split}
\frac{1}{(m-n+1)(k+1)}&\max_{K}\log\prod_{j\in K}\frac{||\tilde f(z)||^\Delta||Q_j||}{|Q_j(\tilde f)(z)|}\\
&\le\dfrac{1}{uH_Y(u)}\left (\max_{\mathcal J\subset\mathcal L}\log\prod_{L\in\mathcal J}\dfrac{||\tilde F(z)||\cdot ||L||}{|L(\tilde F(z))|}-H_Y(u)\log ||\tilde F(z)||\right )\\
&+\Delta\log ||\tilde f(z)||+\frac{(2n+1)Da}{u}\sum_{1\le j\le q}\log\frac{||\tilde f(z)||^\Delta||Q_{j}||}{|Q_{j}(\tilde f)(z)|}+O(1),
\end{split}
\end{align}
where the term $O(1)$ does not depend on $z$. 

Integrating both sides of the above inequality, we obtain 
\begin{align}
\nonumber
\frac{1}{(m-n+1)(n+1)}&\int\limits_{S(r)}\max_{K\in\mathcal K}\log\prod_{j\in K}\frac{||\tilde f(z)||^\Delta||Q_j||}{|Q_j(\tilde f)(z)|}\sigma_p\\
\begin{split}\label{4.9}
\le\dfrac{1}{uH_Y(u)}&\int\limits_{S(r)}\left (\max_{\mathcal J\subset\mathcal L}\log\prod_{L\in\mathcal J}\dfrac{||\tilde F(z)||\cdot ||L||}{|L(\tilde F(z))|}-H_Y(u)\log ||\tilde F(z)||\right )\sigma_p\\
+\int\limits_{S(r)}&\left (d\log ||\tilde f(z)||+\frac{(2n+1)D}{u}\sum_{1\le j\le q}\log\frac{||\tilde f(z)||^d||Q_{j}||}{|Q_{j}(\tilde f)(z)|}\right)\sigma_p+O(1),
\end{split}
\end{align} 
On the other hand, by Theorem B (we note that Theorem B also is valid for the case of meromorphic mappings $f$ of several complex variables), for every $\epsilon'>0$ (which will be chosen later) we have
\begin{align*}
\bigl |\bigl |\ \int\limits_{S(r)}\max_{\mathcal J\subset\mathcal L}&\log\prod_{L\in\mathcal J}\dfrac{||\tilde F(z)||\cdot ||L||}{|L(\tilde F(z))|}\sigma_p-H_Y(u)T_F(r)\le\epsilon' T_F(r)=\epsilon' du T_f(r).
\end{align*}
Combining this inequality with (\ref{4.9}), we have
\begin{align}\label{4.10}
\begin{split}
\bigl |\bigl |\ \frac{1}{(m-n+1)(n+1)}&\int\limits_{S(r)}\max_{K\in\mathcal K}\log\prod_{j\in K}\frac{||\tilde f(z)||^\Delta||Q_j||}{|Q_j(\tilde f)(z)|}\sigma_p\\
&\le\dfrac{\epsilon'\Delta}{H_Y(u)}T_f(r)+\Delta T_f(r)+\frac{(2n+1)D}{u}\sum_{1\le j\le q}m_f(r,Q_i)\\
&\le \left (\Delta+\dfrac{\epsilon'\Delta}{H_Y(u)}+\frac{q(2n+1)D}{u}\right)T_f(r).
\end{split}
\end{align}
Here we note that the term $O(1)$ is absorbed by $\epsilon' \Delta u T_f(r)$.
Choosing $u$ and $\epsilon'$ such that
$$u\ge \frac{2(m-n+1)(n+1)q(2n+1)D}{\epsilon}\text{ and }\epsilon'\le \frac{2(m-n+1)(n+1)\Delta}{H_Y(u)}, $$ 
from (\ref{4.10}) we have
$$ \bigl |\bigl |\ \int\limits_{S(r)}\max_{K\in\mathcal K}\log\prod_{j\in K}\frac{||\tilde f(z)||^\Delta||Q_j||}{|Q_j(\tilde f)(z)|}\sigma_p\le (\Delta (m-n+1)(n+1)+\epsilon)T_f(r).$$
The theorem is proved. \hfill$\square$

\noindent
{\bf Acknowledgements.} This research is funded by Vietnam National Foundation for Science and Technology Development (NAFOSTED) under grant number 101.04-2018.01.

\vskip0.2cm
{\footnotesize 
\noindent
{\sc Si Duc Quang}\\
Department of Mathematics,\\
Hanoi National University of Education,\\
136-Xuan Thuy, Cau Giay, Hanoi, Vietnam.\\
\textit{E-mail}: quangsd@hnue.edu.vn

\end{document}